\documentclass[11pt,reqno]{amsart}
\usepackage[latin9]{inputenc}
\usepackage{color}
\usepackage{cancel}
\usepackage{amsbsy}
\usepackage{amstext}
\usepackage{amsthm}
\usepackage{amssymb}
\usepackage{mathtools}
\PassOptionsToPackage{normalem}{ulem}
\usepackage{ulem}
\usepackage[letterpaper,margin=1in]{geometry}
\usepackage[T1]{fontenc}
\usepackage{lmodern}
\usepackage{amscd}
\usepackage{tikz-cd}
\usepackage{microtype}
\usepackage{amsmath,amssymb,amsthm,mathtools,mathrsfs}
\usepackage{booktabs}
\usepackage{enumitem}
\usepackage{enumitem}
\usepackage{xcolor}
\usepackage{hyperref}

\makeatletter
\numberwithin{equation}{section}
\numberwithin{figure}{section}

\usepackage{tikz-cd}
\usepackage{fancyhdr}
\usepackage{latexsym}
\usepackage{amscd}\usepackage{amsfonts}\usepackage{pstricks}
\usepackage{young}
\usepackage{enumitem}
\usepackage{amsthm}
\usepackage{mathrsfs}
\usepackage{enumitem}

\usepackage{ifthen}
\usepackage{longtable}
\usepackage{todonotes}
\usepackage{marginnote}

\newcommand{\Lie}{\operatorname{Lie}}

\newcommand{\St}{\operatorname{St}}

\newcommand{\codim}{\operatorname{codim}}
\newcommand{\ord}{\operatorname{ord}}

\newcommand{\id}{\operatorname{id}}
\newcommand{\reg}{\operatorname{reg}}
\newcommand{\sing}{\operatorname{sing}}

\newcommand{\Zar}{\operatorname{Zar}}

\newcommand{\eps}{\varepsilon}

\renewcommand{\subsection}[1]{\vspace{3mm}\refstepcounter{subsection}\noindent{\bf \thesubsection. #1.} }

\renewcommand{\subsubsection}[1]{\vspace{3mm}\refstepcounter{subsubsection}\noindent{\bf \thesubsubsection. #1.} }

\numberwithin{equation}{section}

\newtheorem{theorem}{Theorem}[section]
\newtheorem{theoremA}{Theorem}

\newtheorem{proposition}[theorem]{Proposition}
\newtheorem{lemma}[theorem]{Lemma}
\newtheorem{sublemmam}[theorem]{Main Sub-Lemma}
\newtheorem{sublemma}[theorem]{Sub-Lemma}

\theoremstyle{definition}

\theoremstyle{remark}

\makeatother

\begin{document}
\title[Holomorphic Maps from $\mathbb{C}^p$ into Semi-Abelian Varieties ]{Holomorphic Maps from \(\mathbb{C}^p\) into Semi-Abelian Varieties}

\author{Zhe Wang}
\address{
 Department  of Mathematics\newline
\indent University of Houston\newline
\indent Houston,  TX 77204, U.S.A.} 
\email{zwang224@cougarnet.uh.edu}
\thanks{2020\ {\it Mathematics Subject Classification}: 32H30, 32A22, 32Q45.}
\thanks{}

\begin{abstract} We prove the Second Main Theorem with truncation level
one for holomorphic maps with Zariski-dense image intersecting a reduced effective Cartier divisor: Let $A$ be a semi-abelian variety with an equivariant compactification $A\subset\overline A$, and let $f:\mathbb C^p\to A$ be a holomorphic map with Zariski-dense image. Let $D$ be a reduced effective divisor on $A$ extending to a divisor $\overline D$ on $\overline A$. After possibly replacing $\overline A$ by another equivariant compactification depending only on $D$ and independent of $f$, for every $\varepsilon>0$,
\[
T_f(r,\overline D)\leq_{\mathrm{exc}}N_f^{[1]}(r,D)+\varepsilon T_f(r,\overline D).
\]
 \end{abstract}

\maketitle
\baselineskip=16truept

 \section{Introduction}\label{sec:intro}
In \cite{WangCp}, the author used Stoll's fiber integration method to extend the result of Noguchi--Winkelmann--Yamanoi on the Second Main Theorem with finite truncation from holomorphic curves to holomorphic maps from \(\mathbb{C}^p\). We recall this result below. For notation, see \cite{WangCp}, \cite{CaiRuYang2},  \cite{book}, or \cite{NWbook}.

\begin{theoremA}\label{SEMIABELIAN} 
Let $A$ be a semi-abelian variety with an equivariant compactification $A\subset\overline A$, and let $f:\mathbb C^p\to A$ be a holomorphic map with Zariski-dense image. Let $D$ be an effective divisor on $A$ extending to a divisor $\overline D$ on $\overline A$. After possibly replacing $\overline A$ by another equivariant compactification depending only on $D$ and independent of $f$, there exists a positive integer $\rho$, depending on $f$ and $D$, such that
\begin{equation}
T_f(r,\overline D)
\leq
N_f^{[\rho]}(r,D)+S_{f, \overline D}(r),
\end{equation}
where $S_{f, \overline D}(r)$ denotes a nonnegative small term such that $S_{f, \overline D}(r)\leq_{\mathrm{exc}} o(T_f(r, \overline D))$, where $``\leq_{\mathrm{exc}}"$ means that the inequality holds for all $r\in (0, \infty)$ except a set of finite Lebesgue measure.
\end{theoremA}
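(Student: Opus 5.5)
Theorem~\ref{SEMIABELIAN} is the finite-truncation Second Main Theorem of Noguchi--Winkelmann--Yamanoi, transported from holomorphic curves $\mathbb C\to A$ to holomorphic maps $\mathbb C^p\to A$. The plan is to reduce to the curve case by restricting $f$ to complex lines through the origin and averaging over the space of such lines, as in Stoll's fiber integration.

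\textbf{Setup.} For $\xi\in\mathbb P^{p-1}$ choose a unit vector $u_\xi$ and set $f_\xi(z)=f(zu_\xi)$, $z\in\mathbb C$. Stoll's formulas express each $p$-dimensional Nevanlinna function as the Fubini--Study average of its one-variable analogue:
\[
T_f(r,\overline D)=\int_{\mathbb P^{p-1}}T_{f_\xi}(r,\overline D)\,d\sigma(\xi),
\qquad
N^{[\rho]}_f(r,D)=\int_{\mathbb P^{p-1}}N^{[\rho]}_{f_\xi}(r,D)\,d\sigma(\xi).
\]
The second formula holds because, for $\sigma$-almost every $\xi$, the restriction of the pulled-back divisor $f^*D$ to the line $\mathbb C u_\xi$ has multiplicities equal to the generic multiplicities of the components. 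Truncation therefore commutes with restriction.

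\textbf{Applying the curve theorem.} We use the compactification from NWY, which depends only on $D$. The first difficulty is that $f_\xi$ need not have Zariski-dense image. For generic $\xi$ the Zariski closure of $f_\xi(\mathbb C)$ is a translate of a fixed semi-abelian subvariety. The cleaner route is to avoid restricting the Zariski-density hypothesis at all and instead run the NWY proof directly in $p$ variables, using the following ingredients:
\begin{itemize}[leftmargin=*]
\item the jet projection method;
\item the logarithmic derivative lemma for $\mathbb C^p$, obtained by fiber integration;
\item the Log Bloch--Ochiai theorem for $\mathbb C^p$: the Zariski closure of the image of the $k$-jet lift $J_kf$ is invariant under translation by a subgroup, which one proves by restricting to generic lines and taking the union over $\xi$.
\end{itemize}

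\textbf{Main estimate.} Let $X_k$ be the Zariski closure of $J_kf(\mathbb C^p)$ in $A\times\mathbb C^{nk}$, and project the jet image of $D$ to the jet part. Exactly as in NWY, this gives the inequality
\[
T_f(r,\overline D)\le N^{[\rho]}_f(r,D)+\text{(terms bounded by } m(r,\text{log derivatives}))+\varepsilon\, T_f(r,\overline D),
\]
where $\rho$ comes from the order of jets needed to separate the fibers, that is, from the stabilizer dimension argument. The logarithmic derivative terms are $S_{f,\overline D}(r)$ by the $\mathbb C^p$ logarithmic derivative lemma, with exceptional set of finite measure. The integral formulas are used both for this lemma and to compare the proximity and counting functions.

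\textbf{Main obstacle.} The hard step is the Log Bloch--Ochiai-type statement for the jet lifts. We need that the stabilizer of $X_k$ behaves so that the NWY induction on $\dim A$ goes through, and that $\rho$ depends only on $f$ and $D$. I would attack it by first proving that for generic $\xi$ the Zariski closures of $J_kf_\xi$ are contained in $X_k$ and cover it. Then I would apply the curve result of NWY to each $f_\xi$, and use the fact that a countable intersection of proper Zariski-closed conditions leaves a generic set of $\xi$.
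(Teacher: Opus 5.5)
\textbf{Verdict: genuine gap.} The paper does not prove Theorem~\ref{SEMIABELIAN}. It imports it from \cite{WangCp}, remarking only that it was obtained there by Stoll's fiber integration method. Your proposal therefore has to stand on its own, and as written it does not. The inequality in your ``Main estimate'' is asserted ``exactly as in NWY'', but that inequality is the theorem itself. The concrete route you end with, applying the curve theorem of NWY to each $f_\xi$, fails for a reason you do not address.

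\textbf{Averaging the curve theorem.} Your averaging identities are essentially right: truncation commutes with restriction to almost every line. But the obstacle you name, loss of Zariski density, is not real. Fix a proper semi-abelian subvariety $A'\subset A$ with quotient map $q_{A'}$. The set of $\xi$ with $f_\xi(\mathbb C)\subset f(0)+A'$ is the common zero locus of the homogeneous Taylor components of a lift of $q_{A'}\circ f-q_{A'}(f(0))$, so it is algebraic in $\mathbb P^{p-1}$. It is proper because the lines through $0$ cover $\mathbb C^p$ and $f$ is Zariski dense. There are only countably many such $A'$, so the curve Log Bloch--Ochiai theorem shows that $f_\xi$ is Zariski dense for almost every $\xi$.

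The real obstruction to averaging is uniformity. For each $\xi$, NWY gives three things, none controlled uniformly in $\xi$:
\begin{itemize}
\item a truncation level $\rho_\xi$, built from the jet closures of $f_\xi$, which genuinely depend on $\xi$ (for $f(z)=(e^{z_1},\dots,e^{z_p})$ on $(\mathbb C^*)^p$ and generic $\xi$, the first jet closure of $f_\xi$ in logarithmic coordinates is $A\times\{u_\xi\}$);
\item an exceptional set $E_\xi$;
\item an $o(T_{f_\xi}(r))$ error term.
\end{itemize}
For a fixed $r$, the curve inequality can fail on a set of $\xi$ of positive measure on which $T_{f_\xi}(r)$ is uncontrolled. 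Integrating over $\mathbb P^{p-1}$ thus yields neither a single $\rho$ nor an inequality valid off one set of finite measure. Your countable-intersection argument selects good $\xi$, but it bounds neither $\sup_\xi\rho_\xi$ nor the error terms. This is exactly why, in several variables, one averages the proximity terms first and applies the calculus lemma only to the averaged function, rather than averaging finished one-variable inequalities.

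\textbf{The direct route.} Running NWY directly in $p$ variables is the sensible plan, and Section 2 of this paper supplies some of its ingredients. In your proposal, however, it is not set up, for three reasons.
\begin{itemize}
\item A map $f:\mathbb C^p\to A$ has no jet lift into $A\times\mathbb C^{nk}$, which is the jet space of curves. You need either the $p$-germ jets $j_{p,k}f$, with values in $A\times\mathbb C^{n(\binom{p+k}{p}-1)}$, or directional jets along one \emph{fixed} direction.
\item Along lines through $0$, the curve jet of $f_\xi$ at $zu_\xi$ is the directional jet of $f$ in the direction $u_\xi$, which changes with $\xi$. So the claim that the closures of $J_kf_\xi$ ``are contained in $X_k$ and cover it'' has no content until a single jet lift of $f$ defines $X_k$.
\item Your Bloch--Ochiai statement is empty as phrased, since every subvariety is invariant under its own stabilizer.
\end{itemize}

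What the NWY argument actually uses is that the image of the jet lift in $X_k/\St^0_A(X_k)$ has characteristic $S_{f,H}(r)$. This is proved directly in $p$ variables, with no restriction to lines, as in Proposition~\ref{prop:qsmall}: the fibers $V_k$ decrease and stabilize, the stabilizer of the quotient is finite, and the identity theorem concludes. A proof must still supply three further $p$-variable steps, none of which appears in your outline:
\begin{itemize}
\item the valuation identity $\nu_E\bigl((j_{p,k}f)^*J_{p,k}(D)\bigr)=\max\bigl(\nu_E(f^*D)-k,0\bigr)$, which produces the truncated counting function (the case $k=1$ is Lemma~\ref{lem:exact-tangency-valuation});
\item the comparison of $m_f(r,\overline D)$ with the proximity of $j_{p,\rho}f$ to $J_{p,\rho}(D)$, via Lemma~\ref{mainlemma};
\item the argument choosing $\rho$ so that the part of $T_f(r,\overline D)$ not absorbed by $N^{[\rho]}_f(r,D)$ is small.
\end{itemize}
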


The purpose of the present paper is to address the more delicate problem of lowering the finite truncation level in Theorem \ref{SEMIABELIAN} to one, extending the result of Noguchi-Winkelmann-Yamanoi \cite{NWY2008}. Our main result is the following.

\begin{theorem}[Main Theorem]\label{thm:main}

Let $A$ be a semi-abelian variety with an equivariant compactification $A\subset\overline A$, and let $f:\mathbb C^p\to A$ be a holomorphic map with Zariski-dense image. Let $D$ be a reduced effective divisor on $A$ extending to a divisor $\overline D$ on $\overline A$. After possibly replacing $\overline A$ by another equivariant compactification depending only on $D$ and independent of $f$, for every $\varepsilon>0$,
\begin{equation}
T_f(r,\overline D)
\leq_{\mathrm{exc}}
N_f^{[1]}(r,D)+\varepsilon T_f(r,\overline D).
\end{equation}

\end{theorem}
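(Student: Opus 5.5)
We follow the strategy of Noguchi--Winkelmann--Yamanoi \cite{NWY2008} for entire curves and transport each step to $\mathbb C^p$. For the analytic part we use Stoll's fiber integration, as in \cite{WangCp}: restrict $f$ to complex lines $L_u=\{zu: z\in\mathbb C\}$, $u\in S^{2p-1}$, and average. The starting point is Theorem \ref{SEMIABELIAN}, which gives
\[
T_f(r,\overline D)\le N_f^{[\rho]}(r,D)+S_{f,\overline D}(r).
\]
The task is then to show that, for every $\varepsilon>0$,
\[
N_f^{[\rho]}(r,D)-N_f^{[1]}(r,D)\le_{\mathrm{exc}}\varepsilon T_f(r,\overline D).
\]
In words, points where $f^*D$ has high multiplicity contribute only a small part of the counting function.

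\textbf{Step 1 (jet lifts).} Pass to the $k$-jet lifts $J_kf:\mathbb C^p\to J_k(A)=A\times\mathbb C^{N_k}$, taking all partial derivatives up to order $k$. The multiplicity of $f^*D$ at a point $z$ is at least $m$ exactly when $J_{m-1}f(z)$ lies in the jet subvariety $J_{m-1}(D)$. Let $X_k(f)$ be the Zariski closure of $J_kf(\mathbb C^p)$. The jet projection $\mathbb C^{N_k}$-part of $J_kf$ consists of derivatives of a map into a group, i.e.\ logarithmic derivatives. By the lemma on logarithmic derivatives, already used in \cite{WangCp} via fiber integration, its characteristic is $S_f(r)$. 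The log Bloch--Ochiai structure theorem, applied to $X_k(f)$ and its stabilizer, lets us replace $\overline A$ by a compactification depending only on $D$. This is where the ``after possibly replacing $\overline A$'' clause comes from.

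\textbf{Step 2 (key estimate).} The core of NWY is this estimate: for a suitable $k$ and every $\varepsilon$, the ``higher multiplicity'' divisor $E_k:=J_k(D)\cap X_k(f)$ satisfies
\[
N(r,(J_kf)^*E)\le \varepsilon T_f(r,\overline D)+S_f(r).
\]
In the one-variable case this comes from Yamanoi's abc-type argument. One shows that $J_kf$ is non-degenerate in the $\mathbb C^{N_k}$-direction, so that $E_k$ has codimension at least $2$ in $X_k(f)$ fiberwise. Combined with the smallness of the logarithmic derivatives, this gives
\[
T_{J_kf}(r,\cdot)\le (1+\varepsilon)T_f+S_f.
\]
A Nevanlinna inequality for the codimension-two locus then shows it is hit only $\varepsilon$-often. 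For $\mathbb C^p$ we apply this estimate on each line $L_u$ where $f|_{L_u}$ stays Zariski-dense; by a Baire/measure argument this holds for almost all $u$. We then integrate over $u$ using Stoll's formulas
\[
T_f(r)=\int_{S^{2p-1}}T_{f|_{L_u}}(r)\,d\sigma(u),\qquad N^{[1]}_f(r)=\int_{S^{2p-1}}N^{[1]}_{f|_{L_u}}(r)\,d\sigma(u).
\]
The truncated counting function is compatible with this averaging, since the multiplicity along a generic line equals the divisor multiplicity.

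\textbf{Main obstacle.} The difficulty is uniformity. The exceptional sets and the error terms for the restricted curves must be controlled uniformly in $u$. In the one-variable estimate, the exceptional set and the Zariski closure $X_k(f|_{L_u})$ may depend on $u$. I would first try to avoid working line by line: run the whole NWY argument directly on $\mathbb C^p$, using partial derivatives and the $\mathbb C^p$ logarithmic derivative lemma, with the single variety $X_k(f)$. Fiber integration would then be used only for the Nevanlinna-theoretic inequalities (first main theorem, logarithmic derivative lemma, Borel-type growth lemmas), where uniformity is automatic. Combining Step 2 with Theorem \ref{SEMIABELIAN} gives the main inequality after absorbing $S_{f,\overline D}(r)$ into $\varepsilon T_f(r,\overline D)$.
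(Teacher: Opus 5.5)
Neither of your two routes closes, and both break at the step you compress into ``a Nevanlinna inequality for the codimension-two locus.'' The line-by-line route fails for the reason you already flag. For $f|_{L_u}$ the one-variable inequality holds only outside a $u$-dependent exceptional set, and with $u$-dependent data (jet level, $X_k(f|_{L_u})$, stabilizers, auxiliary divisors). So $\le_{\mathrm{exc}}$ statements cannot be averaged over $S^{2p-1}$.

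Your fallback is to run Noguchi--Winkelmann--Yamanoi directly on $\mathbb C^p$ with all partial derivatives. That fails at the truncation-one higher-codimension (GCD) estimate, which is not a formal Nevanlinna inequality but Yamanoi's auxiliary-divisor argument. You need $F_l\in|\hat\gamma_{k+l}^*L^{\otimes n(l)}\otimes\hat\pi_{k+l}^*M_l|$ vanishing to order $\ge l+1$ along the lifted map at points over $Z$, with $n(l)/l\to0$. With full $p$-germ jets, the nonreduced contact scheme has generic rank $\binom{l+p}{p}\asymp l^p$ over its support but still only forces multiplicity $l+1$. The dimension count $h^0(\bar B,L^{\otimes n})\asymp n^{\dim B}>\binom{l+p}{p}\,O(n^{\dim B-2})$ then needs $n(l)\gg l^{p/2}$, which contradicts $n(l)=o(l)$ for $p\ge2$.

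The missing idea is the directional jet, in three steps:
\begin{enumerate}
\item Pick a constant direction $\delta_a$ with $\delta_a f\not\equiv0$ that is transverse at general points to each of the countably many components of $\operatorname{Supp}f^*D$ (Lemma \ref{lem:generic}).
\item Use $j_k^\delta$, whose contact scheme has rank $l+1$ exactly as in one variable (Sub-Lemma \ref{incidence}, Main Sub-Lemma \ref{lem:jyu}).
\item Check that the vanishing order along $t\mapsto x+ta$ equals the divisorial order at general $x\in E$ (Sub-Lemma \ref{lem:discDVR}).
\end{enumerate}
The $p$-germ logarithmic derivative lemma is then needed only to make the jet coordinates and the quotient map small (Lemma \ref{lem:log-jet-coordinate-small}, Proposition \ref{prop:qsmall}).

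Your Step 2 is also structurally off. $X\cap J_{p,1}(D)$ generally has codimension-one components, and non-degeneracy of the jet part does not exclude them. The paper uses only 1-jets. Lemma \ref{lem:exact-tangency-valuation} gives $\nu_E((j_{p,1}f)^*J_{p,1}(D))=m-1$, hence $N^{[\rho]}_f(r,D)\le N^{[1]}_f(r,D)+(\rho-1)\sum_iN^{[1]}(r,(j_{p,1}f)^*J_{p,1}(D_i))+\sum_{i<j}N^{[1]}(r,f^*(D_i\cap D_j))$.

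With $B=\St^0_A(X)$, Proposition \ref{prop:dominant} (tangency versus stabilizer) shows that every component of $X\cap J_{p,1}(D_i)$ falls into one of three cases:
\begin{enumerate}
\item it fails to dominate $X/B$, and is then small by quotient smallness (Lemma \ref{lem:nondominant});
\item it lies over $\Sigma_B(D)$, which has codimension $\ge2$ in $A$;
\item it has codimension $\ge2$ in $X$.
\end{enumerate}
Only the last two cases go to Theorem \ref{cor:HC-applications}. Your sketch has no analogue of this trichotomy and does not treat the $D_i\cap D_j$ term.

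Finally, the GCD bounds are in terms of $T_f(r,H)$, so to get $\varepsilon T_f(r,\overline D)$ you need $\overline D$ big. The paper obtains this by quotienting by $\St^0(D)$. The change of compactification is inherited from Theorem \ref{SEMIABELIAN}, not from a log Bloch--Ochiai argument on $X_k(f)$.
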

In the case $p=1$, according to the arguments of Yamanoi \cite{Y04} (as well as Noguchi-Winkelmann-Yamanoi \cite{NWY2008}), reducing from $N_f^{[\rho]}(r,D)$ to $N_f^{[1]}(r,D)$  requires  estimates for the   truncation-one counting functions  for the  higher codimension subvarieties $Z\subset A$ and $Z \subset X_1\cap J_1(D)$, where $X_1=\overline{j_1f({\Bbb C})}^{\,\Zar}$ for a holomorphic map $f:\mathbb C\to A$. 
More precisely,  we need, fixing an ample divisor $H$ on $\overline{A}$, 

(i) For $Z\subset A$ with $\codim_A Z\ge 2$
and  every $\varepsilon>0$, $
N^{[1]}(r,f^*Z)
\leq_{\mathrm{exc}}
\varepsilon T_{f, H}(r)$. 

(ii)  For 
$Z\subset X_1\cap J_{1}(D)$ with $\codim_{X_1} Z\ge 2$ and every $\varepsilon>0$,  $N^{[1]}\!\left(r,(j_{1}f)^*Z\right)
\leq_{\mathrm{exc}}
\varepsilon T_{f, H}(r).$

The above  two estimates are called the {\it GCD estimates}. The key ingredient in the above GCD estimates is the following fact: if $B$ is a projective variety of dimension $b$ and $Z$ is a subvariety with $\codim_{B} Z\ge 2$, and $M$ is ample on $B$, then
$h^0(B, M^m)\asymp m^b$,  and $h^0(Z, M^m|_Z)=O( m^{b-2})$. 

To deal with the $p>1$ case,  the formally natural higher dimensional analogue of the $l$-jet of a holomorphic curve is the $l$-jet of \(p\)-germs. It records all partial derivatives $\partial^\alpha f$ with $|\alpha|\leq l$. The theory of the $p$-germ jets has already been introduced and discussed in the recent papers  (see Etesse \cite{Ete} and Cai-Ru-Yang \cite{CaiRuYang2}). 
However, a direct use of all partial derivatives of order at most $ l $ is not appropriate. The reason is as follows. In the one-variable case, the nonreduced contact scheme is thickened along the jet direction determined by the based jet, which varies with the base point. It gives contact multiplicity at least $ l +1$ and has generic rank $ l +1$ over its reduced support.
Constructing an auxiliary divisor of degree $n( l )$ therefore requires $ l /n( l )^2\to0$, while obtaining the smallness of the error term requires $n( l )/ l \to0$. Such a sequence exists, for example $n( l )=\lfloor l ^{3/4}\rfloor$. For the $p$-germ jet, however, the analogous construction would thicken the based jet along the $p$-germ jet direction. The contact scheme would then have generic rank $\binom{ l +p}{p}\asymp l ^p$ over its reduced support. Consequently, constructing an auxiliary divisor of degree $n( l )$ would require $ l ^p/n( l )^2\to0$. On the other hand, the divisorial multiplicity would remain only $ l +1$. Thus one would need $ l ^{p/2}\ll n( l )\ll l $, which is impossible for $p\ge2$.

To overcome this difficulty, we choose a generic constant direction. The key idea of our approach is to use two jets: the $p$-germ jets and the directional jet. 
In particular, when deriving the GCD estimates for higher codimensional subvarieties, we choose a generic constant direction
$\delta=\sum_{\nu=1}^p a_\nu\partial/\partial z_\nu$
and use the associated directional jet $j_k^\delta(f)=(f,\delta f,\ldots,\delta^k f).$
At a point $z\in\mathbb C^p$, this is the jet of the one-variable germ $t\mapsto f(z+ta)$. It is therefore adapted to the nonreduced contact scheme construction in Yamanoi's argument. The main new point is to prove that the direction $a$ can be chosen so that the directional order of vanishing along a general transverse complex line agrees with the divisorial order of vanishing on $\mathbb C^p$.

The $p$-germ jet nevertheless remains essential, since the logarithmic derivative Lemma for several variables controls all jet coordinates and hence the corresponding directional jet coordinates.

\section{$p$-Germ Jet Estimates and Smallness}

To extend the jet estimates of Noguchi--Winkelmann--Yamanoi
\cite{NWY2002} from holomorphic curves to holomorphic maps from
\(\mathbb{C}^p\), we apply the following logarithmic derivative estimate for jet
differentials of \(p\)-germs (see Lemma 3.2 in \cite{CaiRuYang2}). It yields an analogue in several
variables of their smallness theorem for jet coordinates
\cite[Lemma~3.8]{NWY2002} and, together with the jet projection method,
gives the quotient-smallness estimate needed below. For notation of $p$-germ jets and $p$-germ jet differentials, see \cite{CaiRuYang2}.

\begin{lemma}[Lemma 3.2 in \cite{CaiRuYang2}]\label{mainlemma}

 Let $X$ be a smooth projective variety of dimension $n$ and $D$ be a normal crossing divisor on $X$. Fix a multi-degree $\boldsymbol{\beta} = (\beta_1,\ldots, \beta_p) \in (\mathbb Z _{\geq 0})^p.$  Let  
	\begin{equation*}
		\mathcal P \in H^0( X,E_{p,k,\boldsymbol{\beta}}^{GG}(X, \log D ))
	\end{equation*}
	be a logarithmic $k$-jet ($k\geq 1$) differential of $p$-germs of  multi-degree $\boldsymbol{\beta}.$ Let $f: \mathbb{C}^p \rightarrow X$ be a meromorphic map such that $f(\mathbb{C}^p) \not\subset  \operatorname{Supp}(D)$. We have
	\begin{eqnarray*}
\int_{S_p(r)} \log^+\big|\mathcal P( j_{p,k}( f ) )(w)\big|  \sigma _p \leq S_{f, H}(r),	
\end{eqnarray*}
	where $H$ is any (fixed) ample divisor on $X$.
\end{lemma}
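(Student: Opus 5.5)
The plan is to reduce the statement to the classical logarithmic derivative lemma in several complex variables, as in Vitter's and Stoll's versions, by writing $\mathcal P(j_{p,k}(f))$ locally as a polynomial in logarithmic derivatives of rational functions composed with $f$.

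\emph{Step 1: local expression.} Cover $X$ by finitely many affine open sets $U_i$ on which $D$ is given by $x_1\cdots x_s=0$ in local coordinates $x_1,\dots,x_n$. Choose these coordinates to be rational functions on $X$. Then $E_{p,k,\boldsymbol\beta}^{GG}(X,\log D)$ is generated over $\mathcal O(U_i)$ by monomials in
\[
\partial^\alpha(\log x_j),\ 1\le j\le s, \qquad \partial^\alpha x_j,\ j>s, \qquad 1\le|\alpha|\le k,
\]
of the prescribed multi-degree. A section $\mathcal P$ is therefore $\sum_\gamma a_{i,\gamma}\, M_\gamma$, with $a_{i,\gamma}$ regular on $U_i$.

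\emph{Step 2: rewrite in rational functions.} Replace each non-log coordinate $x_j$ with $j>s$ by writing $\partial^\alpha x_j=x_j\cdot \partial^\alpha x_j/x_j$. Here $\partial^\alpha x_j/x_j$ is a polynomial in the $\partial^{\alpha'}\log x_j$, via the standard Fa\`a di Bruno-type identity. Now glue with a partition of unity $\{\chi_i\}$ subordinate to the cover, after shrinking so that the coefficients $a_{i,\gamma}$ and the functions $x_j$ ($j>s$) are bounded on $\operatorname{supp}\chi_i$.

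This yields a pointwise bound on $\mathbb C^p\setminus f^{-1}(\operatorname{Supp}D\cup\text{poles})$:
\[
|\mathcal P(j_{p,k}f)|\le C\sum_{i}\sum_{j}\sum_{|\alpha|\le k}\Bigl(1+\bigl|\partial^\alpha(x_{ij}\circ f)/(x_{ij}\circ f)\bigr|\Bigr)^{N}.
\]
Taking $\log^+$ and using $\log^+\sum\le\sum\log^++O(1)$, the integral is bounded by a finite sum of
\[
\int_{S_p(r)}\log^+\bigl|\partial^\alpha g/g\bigr|\,\sigma_p+O(1), \qquad g=x_{ij}\circ f.
\]
Each such $g$ is a meromorphic function on $\mathbb C^p$, not identically zero, because $f(\mathbb C^p)\not\subset\operatorname{Supp}D$. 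The function can also be taken nonconstant where relevant, or the term vanishes.

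\emph{Step 3: apply the logarithmic derivative lemma.} For higher order derivatives, write
\[
\partial^\alpha g/g=\prod_\ell \partial_{\nu_\ell}g_{\ell}/g_{\ell-1},
\]
with $g_\ell$ successive partial derivatives of $g$, and use $T_{\partial g}(r)\le 2T_g(r)+S(r)$ iteratively. The several-variable lemma then gives
\[
\int_{S_p(r)}\log^+|\partial^\alpha g/g|\,\sigma_p\le_{\mathrm{exc}} O(\log^+T_g(r)+\log r).
\]
Finally, $T_g(r)\le c\,T_{f,H}(r)+O(1)$ since $x_{ij}$ is rational and $H$ is ample. So each term is $S_{f,H}(r)$.

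\emph{Main obstacle.} The delicate point is Step 2: making the gluing rigorous so that the bound is global, with coefficients uniformly bounded on a compact $X$. The coordinates must be taken as global rational functions rather than merely local holomorphic ones. I would handle this with the algebraic description of $E^{GG}$ and the boundedness of regular functions on relatively compact pieces, exactly as in Noguchi--Winkelmann--Yamanoi's one-variable proof.
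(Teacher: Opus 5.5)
The paper does not prove this statement. It is quoted from Cai--Ru--Yang and used only as a black box in Lemma~\ref{lem:log-jet-coordinate-small}, so there is no proof in the text to compare with. Your argument is the standard proof of such statements, and it is essentially correct:
\begin{itemize}
\item take rational logarithmic coordinates on a finite cover;
\item bound $|\mathcal P(j_{p,k}(f))|$ pointwise by polynomials in quotients $\partial^\alpha g/g$, where $g$ is a rational function composed with $f$;
\item apply Vitter's several-variable lemma, iterated through $\partial^\alpha g/g=\prod_{\ell=1}^m\partial_{\nu_\ell}g_{\ell-1}/g_{\ell-1}$ with $g_0=g$ and $g_\ell=\partial_{\nu_\ell}g_{\ell-1}$. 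Your indices in this product are shifted, and if some $g_\ell\equiv0$ the term is simply $0$.
\end{itemize}

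The point you call the main obstacle is not one. Take relatively compact opens $V_i\Subset U_i$ covering $X$. For $z$ with $f(z)\in V_i$, the $U_i$-expression gives the bound with constants uniform on $\overline{V_i}$. Every term on the right is nonnegative, so you can just sum over $i$; no partition of unity is needed.

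Two other places do need care.

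\emph{Non-boundary coordinates.} Your reason that $g=x_{ij}\circ f\not\equiv0$ ($f(\mathbb C^p)\not\subset\operatorname{Supp}D$) covers only the boundary coordinates $j\le s$. A non-boundary coordinate can vanish identically along $f$, e.g.\ if $f(\mathbb C^p)\subset\{x_{s+1}=0\}$. Then writing $\partial^\alpha x_j=x_j\cdot(\partial^\alpha x_j/x_j)$ is meaningless. There are two easy fixes:
\begin{itemize}
\item For $j>s$, replace $x_j$ by $x_j+c_j$ with $|c_j|>\sup_{\overline{V_i}}|x_j|$. No derivative changes, and the new function is bounded and zero-free on $\overline{V_i}$. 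So its composition with $f$ is not identically zero whenever $f$ meets $V_i$.
\item Alternatively, observe that if $x_j\circ f\equiv0$, then every monomial containing some $\partial^\alpha x_j$ vanishes identically along $f$.
\end{itemize}

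\emph{Simple versus general normal crossings.} Step~1 needs \emph{simple} normal crossings. At a self-crossing point of a single irreducible component, any rational function vanishing on one local branch vanishes on the whole component, hence on both branches. Comparing residues along the two branches at the crossing point then shows that $dz_1/z_1$ is not an $\mathcal O$-linear combination of forms $d\phi/\phi$ with $\phi$ rational. With the usual convention in this literature that normal crossing means simple normal crossing, this is harmless. For a genuinely non-simple normal crossing divisor, you would first pass to an embedded resolution on which the reduced total transform of $D$ is simple normal crossing. Then pull back $\mathcal P$ and use $T_{\tilde f}(r,\tilde H)=O(T_f(r,H))$.
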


\begin{lemma}\label{lem:log-jet-coordinate-small}
Let $A$ be a semi-abelian variety and let
$
f:\mathbb C^p\longrightarrow A
$
be a holomorphic map. Fix a smooth equivariant compactification
$A\subset\overline A$ and an ample line bundle $H$ on $\overline A$.
For every fixed $k\geq1$, let
$
j_{p,k}(f)=\bigl(f,\xi_{p,k}(f)\bigr)
$
be the $p$-germ jet lifting, where
$\xi_{p,k}(f)$ denotes the jet coordinate part. Then
\[
T_{\xi_{p,k}(f)}(r)
\leq
S_{f, H}(r).
\]
\end{lemma}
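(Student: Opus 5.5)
Since $A$ is a semi-abelian variety, its holomorphic tangent bundle is trivial. Let $\omega_1,\dots,\omega_n$ be a basis of invariant holomorphic $1$-forms on $A$. For an equivariant compactification $\overline A$ with boundary $\partial A=\overline A\setminus A$, one may take $\overline A$ smooth with $\partial A$ a simple normal crossing divisor. These forms then extend to logarithmic $1$-forms in $H^0(\overline A,\Omega^1_{\overline A}(\log\partial A))$. This trivialization identifies the $p$-germ jet space $J_{p,k}(A)\cong A\times\mathbb C^{N}$, with $N=n\bigl(\binom{k+p}{p}-1\bigr)$. In these coordinates the jet coordinate part of $j_{p,k}(f)$ is
\[
\xi_{p,k}(f)=\Bigl(\partial^{\alpha-e_\nu}\bigl(f^*\omega_i/dz_\nu\bigr)\Bigr)_{1\le i\le n,\,1\le|\alpha|\le k},
\]
where $e_\nu$ is any unit vector with $\alpha-e_\nu\ge0$. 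Equivalently, the entries are the derivatives $\partial^\beta g_{i\nu}$ with $|\beta|\le k-1$ of the entire functions $g_{i\nu}=\omega_i(\partial f/\partial z_\nu)$. The plan is to regard $\xi_{p,k}(f):\mathbb C^p\to\mathbb C^N\subset\mathbb P^N$ as a holomorphic map into projective space and bound its characteristic function.

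First I view each coordinate $\partial^\beta g_{i\nu}$ as $\mathcal P_{i,\nu,\beta}(j_{p,k}(f))$, where $\mathcal P_{i,\nu,\beta}$ is a logarithmic $k$-jet differential of $p$-germs of multi-degree $e_\nu+\beta$ on $(\overline A,\partial A)$. Concretely, it is the differentiated form $\partial^\beta$ applied to $\omega_i$, paired with $\partial/\partial z_\nu$. That these are global sections of $E^{GG}_{p,k,\boldsymbol\beta}(\overline A,\log\partial A)$ follows from the logarithmic extension of the $\omega_i$: near a boundary component they are $\mathbb C$-combinations of $d\log$ of the local defining functions and holomorphic forms. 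Since $f(\mathbb C^p)\subset A$, we have $f(\mathbb C^p)\not\subset\operatorname{Supp}\partial A$. Lemma \ref{mainlemma} then gives
\[
\int_{S_p(r)}\log^+\bigl|\partial^\beta g_{i\nu}\bigr|\,\sigma_p\le S_{f,H}(r)
\]
for each of the finitely many indices.

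Next I convert this into a bound on $T_{\xi_{p,k}(f)}(r)$. Because $\xi_{p,k}(f)$ is entire, with values in $\mathbb C^N$, its image misses the hyperplane at infinity. By Jensen's formula, or the standard identity for maps into affine space, we get
\[
T_{\xi_{p,k}(f)}(r)=\int_{S_p(r)}\log\Bigl(1+\sum|\partial^\beta g_{i\nu}|^2\Bigr)^{1/2}\sigma_p+O(1)\le\sum_{i,\nu,\beta}\int_{S_p(r)}\log^+|\partial^\beta g_{i\nu}|\,\sigma_p+O(1).
\]
There is no counting-function contribution from poles. Summing the finitely many estimates yields $T_{\xi_{p,k}(f)}(r)\le S_{f,H}(r)$. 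If the ample $H$ in the statement differs from the one used in Lemma \ref{mainlemma}, the comparison $T_{f,H}\asymp T_{f,H'}$ shows that the small term class is unchanged.

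The main obstacle I expect is the first step: checking carefully that the jet coordinates defined via the trivialization really are global logarithmic jet differentials of $p$-germs in the precise sense of \cite{CaiRuYang2}. One needs the boundedness of their pole orders along $\partial A$ and the correct multi-degree bookkeeping for mixed partials. I would handle this by a local computation in equivariant coordinates near $\partial A$. There $\overline A$ is locally a toric bundle over the abelian quotient and $\omega_i$ is a combination of $dw_j/w_j$ and $dx_l$. Differentiating $\partial^\beta(\partial_\nu\log w_j\circ f)$ gives polynomials in the logarithmic jet variables $\partial^\gamma\log w_j$, which is exactly the allowed form.
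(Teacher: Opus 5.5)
Your proposal is correct and matches the paper's proof: you realize each jet coordinate as a global logarithmic $p$-germ jet differential built from the invariant logarithmic $1$-forms, apply Lemma~\ref{mainlemma} to each coordinate, and then sum over the finitely many coordinates, using that $T$ of a holomorphic map into affine space is bounded by the sum of the $\log^+$ integrals up to $O(1)$. Your local check near $\partial A$ that these coordinates really are logarithmic jet differentials supplies a detail the paper only asserts.
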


\begin{proof}
Let
$\partial A=\overline A\setminus A,~n=\dim A,$
and let
$q:\mathbb C^n\longrightarrow A$
be the universal covering map. Choose linear coordinates
\(e_1,\ldots,e_n\) on \(\mathbb C^n\). Since \(\overline A\) is a
smooth equivariant compactification of \(A\), its logarithmic
cotangent bundle is trivial. We may therefore choose a global
invariant frame
$
\omega_1,\ldots,\omega_n
\in
H^0\!\left(
\overline A,\Omega_{\overline A}^1(\log\partial A)
\right)
$
such that
$
q^*\omega_\lambda=de_\lambda,
~ 1\leq\lambda\leq n.
$

Let
$
\bar u=(u_1,\ldots,u_m)\in\mathcal W_{p,\leq k}
$
be a nonempty word, and let $\boldsymbol{\alpha}(\bar u)=\bigl(\alpha_1(\bar u),\ldots,\alpha_p(\bar u)\bigr)\in(\mathbb Z_{\geq0})^p.$ For
\(1\leq\lambda\leq n\) and a \(p\)-germ
\(h:(\mathbb C^p,0)\to  A\), define
\[
\mathcal P_{\lambda,\bar u}(j_{p,m}(h))
=
\partial_{u_2}\cdots\partial_{u_m}
\left[
h^*\omega_\lambda
\left(\frac{\partial}{\partial z_{u_1}}\right)
\right](0).
\]

We have
\[
\mathcal P_{\lambda,\bar u}
\in
H^0\left(
\overline A,
E^{\mathrm{GG}}_{p,m,\boldsymbol{\alpha}(\bar u)}
(\overline A,\log\partial A)
\right),
\]
and the coordinate
functions of \(\xi_{p,k}(f)\) are precisely
$\mathcal P_{\lambda,\bar u}(j_{p,m}(f)),~1\leq\lambda\leq n,~1\leq|\bar u|\leq k.$

Fix \(\lambda\) and \(\bar u\). 
Lemma~\ref{mainlemma} applied to
\(\mathcal P_{\lambda,\bar u}\), gives
\[
\int_{S_p(r)}
\log^+
\left|
\mathcal P_{\lambda,\bar u}(j_{p,m}(f))(w)
\right|
\,\sigma_p
\leq
S_{f, H}(r).
\]
The function
\(\mathcal P_{\lambda,\bar u}(j_{p,m}(f))\) is holomorphic on \(\mathbb{C}^p\). Hence
\[
T_{\mathcal P_{\lambda,\bar u}(j_{p,m}(f))}(r)
=
\int_{S_p(r)}
\log^+
\left|
\mathcal P_{\lambda,\bar u}(j_{p,m}(f))(w)
\right|
\,\sigma_p
+O(1)
\leq
S_{f, H}(r).
\]

For fixed \(k\), the map \(\xi_{p,k}(f)\) has only finitely many
coordinate functions \(\mathcal P_{\lambda,\bar u}(j_{p,m}(f))\).  We have
\[
\begin{aligned}
T_{\xi_{p,k}(f)}(r)\leq S_{f, H}(r).
\end{aligned}
\]
This proves the lemma.
\end{proof}

\begin{proposition}\label{prop:qsmall}
Let \(A\) be a semi-abelian variety and let \(g=(f,\phi):\mathbb C^p\to A\times\mathbb A^N\) be a holomorphic map with \(T_{\phi}(r)\leq S_{f,H}(r)\), and set \(X=\overline{g(\mathbb C^p)}^{\,\Zar}\). Let \(B=\St_A^0(X)\), \(C=A/B\), \(q_B:A\to C\) be the quotient map, \(\pi_B=q_B\times\operatorname{id}_{\mathbb A^N}\), and \(Y=X/B\). Then, for every ample
line bundle \(M\) on a projective compactification of \(Y\),
\begin{equation}\label{eq:qsmall}
 T_{\pi_B\circ g}(r,M)
 \leq S_{f, H}(r).\nonumber
\end{equation}
\end{proposition}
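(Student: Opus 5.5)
This is the several-variable analogue of the quotient-smallness step in Noguchi--Winkelmann--Yamanoi (the ``jet projection method''). The plan is to reduce it to the structure theorem for subvarieties of $A\times\mathbb A^N$ that are not translation invariant, and to the elementary estimate for maps into a variety of log general type with small varying part.

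\textbf{Step 1: structure of $Y$.} Since $B=\St_A^0(X)$, the quotient $Y=X/B\subset C\times\mathbb A^N$ has finite stabilizer in $C$. By the Kawamata--Noguchi-type structure theorem (in the relative form over $\mathbb A^N$ used in \cite{NWY2002}), the fibres $Y_\phi$ of $Y\to\mathbb A^N$ over a generic point are subvarieties of $C$ with finite stabilizer, hence of log general type. More usefully, after an equivariant compactification $\overline C$ there is a proper Zariski closed $E\subset Y$ with the following property. The logarithmic $1$-forms $\omega_1,\dots,\omega_c$ of $\overline C$, together with the coordinates of $\mathbb A^N$, furnish enough jet differentials. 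In particular, for an ample $M$ on $\overline Y\subset\overline C\times\mathbb P^N$, there are logarithmic jet differentials on $\overline Y$ with values in $M^{-1}$ (after twisting by a power of $\mathcal O_{\mathbb P^N}(1)$) which do not vanish identically on $\pi_B\circ g$.

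\textbf{Step 2: reduce to a jet-coordinate inequality.} Instead of constructing these differentials abstractly, I would work as NWY do through the jet projection. The image $\pi_B\circ g=(q_B\circ f,\phi)$ is Zariski dense in $Y$. The map $q_B\circ f$ has jet lift whose coordinates are linear combinations of those of $\xi_{p,k}(f)$, since the $\omega_\lambda$ on $C$ pull back to invariant forms on $A$. Hence Lemma~\ref{lem:log-jet-coordinate-small} controls them by $S_{f,H}(r)$. Consider the map
\[
\Psi_k:Y\longrightarrow \mathbb A^{N}\times J_{k}\text{-coordinates},
\]
which sends a point of $Y$ to the data of $\phi$ and the jet part of $j_{p,k}$. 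The finite-stabilizer property implies that for $k$ large, the map $x\mapsto(\phi,\xi_{p,k})$ restricted to the Zariski closure $X_k$ of $j_{p,k}(\pi_B\circ g)$ is generically finite onto its image. This is the several-variable version of \cite[Lemma~3.x]{NWY2002}. The proof goes by restricting to a generic line $t\mapsto z+ta$ and applying the one-variable statement.

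\textbf{Step 3: height comparison.} Generic finiteness yields a rational function identity. Each coordinate of the compactified $Y$, pulled back by $\pi_B\circ g$, satisfies a polynomial equation whose coefficients are polynomials in $\phi$ and $\xi_{p,k}$. By the standard inequality $T_u\le \deg\cdot\sum T_{\text{coeff}}+O(1)$ for algebraic dependence, we get
\[
T_{\pi_B\circ g}(r,M)\le O\bigl(T_\phi(r)+T_{\xi_{p,k}(f)}(r)\bigr)+O(1).
\]
The hypothesis $T_\phi\le S_{f,H}$ and Lemma~\ref{lem:log-jet-coordinate-small} then give the claim.

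\textbf{Main obstacle.} The hard point is Step 2: showing that finite stabilizer forces $(\phi,\xi_{p,k})$ to be generically finite on the jet-image closure for $p$-germ jets. I would argue as follows. Suppose a positive-dimensional fibre exists for all $k$. Then by Noetherian induction there is a subvariety stable under a one-parameter subgroup of $C$ containing $\pi_B\circ g(\mathbb C^p)$ up to translation. This contradicts either Zariski density or the finiteness of $\St_C(Y)$.
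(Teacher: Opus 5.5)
Your architecture is the paper's. You take the Zariski closure $Y_k$ of $j_{p,k}(\pi_B\circ g)$ and show that forgetting the $C$-coordinate is generically finite on $Y_{k_0}$ for some $k_0$. You then bound $T_{\pi_B\circ g}$ by the characteristic of the remaining coordinates, which are small by hypothesis and Lemma~\ref{lem:log-jet-coordinate-small}. Your Step~3 is a hands-on substitute for the comparison the paper takes from \cite[Proposition~2.5.20]{NWbook}, and Step~1 is never used, so it can be dropped.

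The gap is the step you flag yourself: generic finiteness is asserted, and neither sketch supplies the mechanism. The reduction to a generic line $t\mapsto z+ta$ cannot work as stated. A one-variable jet along $\ell$ at a point only determines $\pi_B\circ g$ on $\ell$. So the most it can yield is $c'+\pi_B\circ g(\ell)\subset Y$, i.e.\ that $c'$ translates the Zariski closure $Y_\ell$ of the restricted curve into $Y$. That set of translations contains $\St_C(Y)$ but is not controlled by it. Moreover, the one-variable result of \cite{NWY2002} concerns the jet closure of $\pi_B\circ g|_\ell$ and the stabilizer of $Y_\ell$, not $Y_k$. Your fallback (``by Noetherian induction there is a subvariety stable under a one-parameter subgroup\dots'') names the right contradiction but omits exactly the step that produces elements of $\St_C(Y)$.

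The missing argument goes as follows. Fix $0\in\mathbb C^p$. Let $V_k\subset C$ be the set of $c$ with $(c,\chi_k(j_{p,k}(\pi_B\circ g)(0)))\in Y_k$, where $\chi_k$ forgets the $C$-coordinate.
\begin{enumerate}
\item Truncation $Y_{k+1}\to Y_k$ commutes with $\chi$, so $V_{k+1}\subset V_k$, and the chain stabilizes at some closed $V$.
\item For $c\in V$, the translate $\pi_B\circ g+(c-q_B\circ f(0))$ has the same jet coordinates. Hence its $k$-jet at $0$ lies in $Y_k\subset J_{p,k}(Y)$ for every $k$.
\item The full $p$-germ jet at one point is the entire Taylor expansion, so the identity theorem puts the translated map into $Y$.
\item Zariski density then gives $c-q_B\circ f(0)\in\St_C(Y)$, which is a finite group because $\St_C^0(Y)=\{0\}$.
\item So some $V_{k_0}$ is finite, and upper semicontinuity of fibre dimension upgrades this single finite fibre to generic finiteness of $\chi_{k_0}$.
\end{enumerate}
Step (iii) is precisely where full $p$-germ jets, not directional ones, are indispensable.

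A smaller point: for the identity-theorem step, $Y_k$ must also record the jets of $\phi$. So in Step~3 you need the derivatives of $\phi$ to be small too, not only $T_\phi$ and $T_{\xi_{p,k}(f)}$. They are small, by the logarithmic derivative lemma, since $\phi$ is entire with $T_\phi\le S_{f,H}$.
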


\begin{proof}
Write
\[
 \pi_B\circ g=(q_B\circ f,\phi):\mathbb C^p\to Y\subset C\times \mathbb{A}^N.
\]
We have $\St_C^0(Y)=\{0\}$. For \(k\geq0\), let $Y_k=\overline{j_{p,k}(\pi_B\circ g)(\mathbb{C}^p)}^{\,\Zar}\subset C\times \mathbb{A}^N \times J_{p,k,C\times \mathbb{A}^N } $ and let $\chi_k:Y_k\rightarrow \mathbb{A}^N \times J_{p,k,C\times \mathbb{A}^N }$, where $J_{p,k,C\times\mathbb A^N}$ denotes the fiber over $(0_C,0)$ of $J_{p,k}(C\times\mathbb A^N)\to C\times\mathbb A^N$.
Note that $J_{p,k,C\times \mathbb A^N}
\simeq
\mathbb C^{
(\dim C+N)\left(\binom{p+k}{p}-1\right)}. $ 

Fix \(0\in\mathbb C^p\) and identify the fiber of $\chi_k$ over
$\chi_k(j_{p,k}(\pi_B\circ g)(0))$
with a closed subset of $C$.  Set
\[
 V_k=\left\{c\in C~|~
 (c,\chi_k(j_{p,k}(\pi_B\circ g)(0)))\in Y_k\right\}.
\]
Then \(V_k\) are nonempty closed algebraic subsets of \(C\), and satisfy \(V_{k+1}\subset V_k\). Thus we have the sequence of Zariski closed sets
$$
\cdots \subset V_3 \subset V_2 \subset V_1 
$$
that eventually stabilizes to a closed subset \(V\).

Let $c\in V$ and translate $\pi_B\circ g$ by
$ c-(q_B\circ f)(0)$ by putting $\widetilde{\pi_B\circ g}=\pi_B\circ g+c-(q_B\circ f)(0)$. Then its \(k\)-jet at
\(0\) lies in \(Y_k\) for every \(k\geq0\).  Since
\(Y_k\subset J_{p,k}Y\), we have $ j_{p,k}(\widetilde{\pi_B\circ g})(0)\in J_{p,k}Y$. It implies that 
\((\widetilde{\pi_B\circ g})(\mathbb{C}^p)\subset Y\).  Therefore \(c-(q_B\circ f)(0)\) stabilizes \(Y\).  The stabilizer $ \St_C(Y)$ is
finite, so  \(V\) is finite. It follows that, for some \(k_0\), \(\chi_{k_0}:\overline{Y_{k_0}}\dashrightarrow \overline{\chi_{k_0}(Y_{k_0})}\) is generically finite, where $\overline{Y_{k_0}}$ is a projective compactification of $Y_{k_0}$, and $\overline{\chi_{k_0}(Y_{k_0})}$ is the Zariski closure of $\chi_{k_0}(Y_{k_0})$ in a projective compactification of $\mathbb A^N\times J_{p,k_0,C\times\mathbb A^N}$.
\[
\begin{tikzcd}[
    column sep=6em,
    row sep=4em
]
& Y_{k_0}
    \arrow[r, "\chi_{k_0}"]
   \arrow[d, "\rho_{k_0,0}"']
&
\chi_{k_0}(Y_{k_0})\subset \mathbb{A}^N
\times J_{p,k_0,C\times\mathbb A^N}
\\
\mathbb{C}^p
    \arrow[ur, "j_{p,{k_0}}(\pi_B\circ g)"]
    \arrow[r, "\pi_B\circ g"']
&
Y
\end{tikzcd}
\]

Let $L$ be an ample line bundle on $\overline{\chi_{k_0}(Y_{k_0})}$. Since \(\chi_{k_0}:\overline{Y_{k_0}}\dashrightarrow \overline{\chi_{k_0}(Y_{k_0})}\) is generically finite, \cite[Proposition~2.5.20]{NWbook} gives 
\begin{equation}\label{eq:jet-generic-finite}
T_{j_{p,k_0}(\pi_B\circ g)}(r,M_{k_0})
=
O\!\left(
T_{\chi_{k_0}\circ j_{p,k_0}(\pi_B\circ g)}(r,L)
\right),
\end{equation}
where $M_{k_0}$ is any ample line bundle on $\overline{Y_{k_0}}$.

Now we claim that 
\[
T_{\chi_{k_0}\circ j_{p,{k_0}}(\pi_B\circ g)}(r, L)
\leq 
S_{f, H}(r).
\]

Indeed, we have the jet lift
$j_{p,{k_0}}(\pi_B\circ g)=\left(q_B\circ f,\,\phi,\,\xi_{p,{k_0}}(\pi_B\circ g)\right),$
where $\xi_{p,{k_0}}$ denotes the jet coordinates.
The map $\chi_{k_0}$ forgets the $C$-coordinate, it follows
$\chi_{k_0}\circ j_{p,{k_0}}(\pi_B\circ g)=\left(\phi,\,\xi_{p,{k_0}}(\pi_B\circ g)\right).$
By assumption, $T_\phi(r)\leq S_{f,H}(r)$, which, together with Lemma~\ref{lem:log-jet-coordinate-small}, implies $T_{\xi_{p,k_0}(\pi_B\circ g)}(r)\leq S_{f,H}(r)$.
Thus
\[
T_{\chi_{k_0}\circ j_{p,{k_0}}(\pi_B\circ g)}(r, L)
\leq S_{f, H}(r).
\]
This proves our claim.
By (\ref{eq:jet-generic-finite}), we have
\begin{equation}\label{eq:jet-smallness}
T_{j_{p,k_0}(\pi_B\circ g)}(r,M_{k_0})
\leq S_{f,H}(r).
\end{equation}
Next, we prove that
\[
T_{\pi_B\circ g}(r,M)
=
O\!\left(
T_{j_{p,k_0}(\pi_B\circ g)}(r,M_{k_0})
\right).
\]
Indeed, the natural projection $\rho_{k_0,0}:Y_{k_0}\longrightarrow Y$
satisfies 
$\rho_{k_0,0}\circ j_{p,k_0}(\pi_B\circ g)=\pi_B\circ g.$
Hence, by \cite[Proposition~2.5.20]{NWbook}, we have
\begin{equation}\label{eq:projection-characteristic}
T_{\pi_B\circ g}(r,M)
=
O\!\left(
T_{j_{p,k_0}(\pi_B\circ g)}(r,M_{k_0})
\right).
\end{equation}
Thus, by (\ref{eq:jet-smallness}) and (\ref{eq:projection-characteristic}), we obtain
\[
\begin{aligned}
T_{\pi_B\circ g}(r,M)
&=
O\!\left(
T_{j_{p,k_0}(\pi_B\circ g)}(r,M_{k_0})
\right)\\
&\leq
S_{f, H}(r).
\end{aligned}
\]
\end{proof}

\section{The directional jets}

\begin{lemma}\label{lem:generic}
Let $f:\mathbb C^p\to A$ be a holomorphic map with Zariski-dense image, and let \(\{E_\nu\}_{\nu\geq1}\) be a countable family of irreducible
analytic hypersurfaces in \(\mathbb C^p\).  There exists
$
 0\neq a=(a_1,\ldots,a_p)\in\mathbb C^p,~
 \delta_a=\sum_{j=1}^pa_j\frac{\partial}{\partial z_j},
$
such that:
\begin{enumerate}
\item \(\delta_a\) is transverse to every \(E_\nu\) at a general smooth
      point;
\item \(\delta_a f\not\equiv0\).
\end{enumerate}
\end{lemma}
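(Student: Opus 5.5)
The plan is to show that each of the two conditions fails only for $a$ in a small exceptional set of $\mathbb C^p$, and that a countable union of such sets cannot exhaust $\mathbb C^p$.

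\textbf{Condition (1).} Fix $\nu$ and pick a smooth point $z_0\in E_\nu$. Near $z_0$, write $E_\nu=\{h_\nu=0\}$ with $dh_\nu(z_0)\neq0$. The direction $\delta_a$ is tangent to $E_\nu$ at $z_0$ exactly when
\[
\sum_j a_j\,\partial h_\nu/\partial z_j(z_0)=0 .
\]
This is a proper hyperplane $P_\nu\subset\mathbb C^p$ in the variable $a$. If $a\notin P_\nu$, then $\delta_a h_\nu(z_0)\neq 0$. The set of points of $E_\nu$ where $\delta_a h_\nu\neq0$ is then a nonempty Zariski-open subset of the irreducible smooth locus $\operatorname{Reg}E_\nu$. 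Since $\operatorname{Reg}E_\nu$ is connected, this set is dense in it, so $\delta_a$ is transverse to $E_\nu$ at a general smooth point. Condition (1) therefore holds for every $a$ outside the countable union $\bigcup_\nu P_\nu$ of proper hyperplanes.

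\textbf{Condition (2).} Lift $f$ to the universal cover: locally $f=q\circ F$, and the $\mathbb C^n$-valued map $\delta_a F=\sum_j a_j\partial_jF$ is globally well defined. Equivalently, use the invariant forms $\omega_\lambda$ from the proof of Lemma~\ref{lem:log-jet-coordinate-small}: $\delta_a f\equiv0$ iff $f^*\omega_\lambda(\delta_a)\equiv0$ for all $\lambda$.

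If $\partial_j f\equiv 0$ for all $j$, then $f$ is constant. A constant map does not have Zariski-dense image when $\dim A>0$; the case $\dim A=0$ is trivial or excluded. So some $\partial_{j_0}F\not\equiv0$, and there is a point $z_1$ with $dF(z_1)\neq0$.

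The set of $a$ with $\delta_a f\equiv 0$ is contained in $\{a:\ dF(z_1)a=0\}=\ker dF(z_1)$. This is a proper linear subspace $K$, because $dF(z_1)\neq0$.

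\textbf{Choosing $a$.} The set $\bigcup_\nu P_\nu\cup K$ is a countable union of proper linear subspaces of $\mathbb C^p$. It has Lebesgue measure zero, and by the Baire category theorem it is not all of $\mathbb C^p$. Choose any $a$ outside it; this $a$ is nonzero and satisfies (1) and (2).

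\textbf{Main subtlety.} The key point is making ``general smooth point'' rigorous in condition (1). This uses the irreducibility of $E_\nu$, which guarantees that its regular locus is connected, together with the identity principle applied to $\delta_a h_\nu$ along that locus. The local defining functions patch into a well-defined tangency condition because tangency is independent of the choice of local equation. Zariski density is used only weakly, to rule out $f$ being constant.
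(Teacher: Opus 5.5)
Your proof is correct and follows the paper's argument. For each $E_\nu$ you exclude a proper linear subspace of directions tangent to $E_\nu$, and you exclude the proper subspace $\{a:\delta_a f\equiv0\}$, which is proper since otherwise $f$ would be constant. You then use that a countable union of proper linear subspaces cannot cover $\mathbb C^p$.

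The only difference is cosmetic. The paper uses a global reduced entire equation $h_\nu$ and the subspace $L_\nu$ of directions tangent along all of $E_\nu$. You instead use a local equation at one smooth point, which gives a hyperplane $P_\nu\supset L_\nu$, together with connectedness of $\operatorname{Reg}E_\nu$.
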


\begin{proof}
For each $\nu\geq1$, choose a reduced entire function $h_\nu\in\mathcal O(\mathbb C^p)$ such that $E_\nu=\{h_\nu=0\}$. Put $L_\nu=\{a\in\mathbb C^p~|~\left.\sum_{j=1}^pa_j\partial_jh_\nu \right|_{E_\nu}\equiv0\}$. Then $L_\nu$ is a complex linear subspace of $\mathbb C^p$. Since $h_\nu$ is reduced, $dh_\nu$ does not vanish identically along $E_\nu$, and hence $L_\nu$ is proper. For every $a\notin L_\nu$, $\delta_a h_\nu$ does not vanish identically on $E_\nu$, so $\delta_a$ is transverse to $E_\nu$ at a general smooth point.

Put $M_f=\{a\in\mathbb C^p~|~\delta_a f\equiv0\}$. It is a proper complex linear subspace of $\mathbb C^p$, since otherwise $df\equiv0$ and hence $f$ would be constant, contradicting the Zariski density of its image. A countable union of proper complex linear subspaces cannot cover $\mathbb C^p$, proving the lemma.
\end{proof}

Let \(\mathbb{A}^N\) be an affine space with trivial \(A\)-action, and let
\[
 g=(f,\phi):\mathbb C^p\longrightarrow A\times \mathbb{A}^N
\]
be a holomorphic map with $T_{\phi}(r)\leq S_{f,H}(r)$.

For the direction fixed in Lemma~\ref{lem:generic}, and put
$\delta=\delta_a.$ Define
\begin{equation}\label{eq:mixedtower}
 j_l^{\delta}(g)(z)=j_l\bigl(t\longmapsto g(z+ta)\bigr)\big|_{t=0},
 \qquad
 X_l=\overline{j_l^{\delta}(g)(\mathbb C^p)}^{\,\Zar}\subset A\times \mathbb{A}^N\times J_{l,A\times\mathbb{A}^N},\nonumber
\end{equation}
where $J_{l,A\times\mathbb A^N}$ denotes the fiber over the identity $(0_A,0)$ of the projection $J_l(A\times\mathbb A^N)\to A\times\mathbb A^N$.

\begin{lemma}\label{cor:commonq}
Let $G=\cap_{l \geq 0} \mathrm{St}_A^0(X_l)$. For every
fixed \(k\), the quotient map
$
 \pi_k:X_k\longrightarrow Y_k=X_k/G
$
satisfies
\[
 T_{\pi_k\circ j_k^{\delta}(g)}(r,M_k)\leq S_{f, H}(r)
\]
for every line bundle \(M_k\) on a fixed projective compactification
of \(Y_k\).
\end{lemma}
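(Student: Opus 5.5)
The plan is to reduce the statement to Proposition~\ref{prop:qsmall}, applied to the directional jet lift in place of $g$. Write
\[
j_k^{\delta}(g)=\bigl(f,\phi,\xi^{\delta}_k(g)\bigr):\mathbb C^p\longrightarrow A\times\bigl(\mathbb A^N\times J_{k,A\times\mathbb A^N}\bigr),
\]
where $\xi^\delta_k(g)$ collects the directional jet coordinates. Along $A$ these are the functions $\delta^{m-1}\bigl[f^*\omega_\lambda(\delta)\bigr]$ for $1\le m\le k$; along $\mathbb A^N$ they are the derivatives $\delta^m\phi$. Since $\mathbb A^N\times J_{k,A\times\mathbb A^N}$ is again an affine space with trivial $A$-action, the map $j_k^\delta(g)$ has exactly the shape $(f,\phi')$ required in Proposition~\ref{prop:qsmall}, with $\phi'=(\phi,\xi^\delta_k(g))$. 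Its Zariski closure is $X_k$ by definition.

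First step: show $T_{\phi'}(r)\le S_{f,H}(r)$. Because $\delta=\sum a_j\partial/\partial z_j$ has constant coefficients, each $\delta^{m-1}[f^*\omega_\lambda(\delta)]$ is a fixed polynomial, with coefficients given by monomials in $a$, in the $p$-germ jet coordinates $\mathcal P_{\lambda,\bar u}(j_{p,m}(f))$ of Lemma~\ref{lem:log-jet-coordinate-small}. Those coordinates satisfy $T\le S_{f,H}(r)$. Hence $\log^+$ of the directional coordinates is bounded by sums of $\log^+$ of these, and the bound follows.

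The derivatives $\delta^m\phi$ need a separate argument, since $\phi$ is merely holomorphic with small characteristic. I would handle them with the several-variable logarithmic derivative lemma for $\mathbb C^p$ applied to the coordinate functions of $\phi$. This gives $m(r,\delta^m\phi_i/\phi_i)=S_{\phi}(r)$, so $T_{\delta^m\phi_i}\le O(T_{\phi_i})+S_{\phi}(r)\le S_{f,H}(r)$. Here the exceptional sets and error terms for $\phi$ are dominated by those for $f$ because $T_\phi$ is small. This piece is the only genuinely analytic point, and I expect it to be the main technical obstacle. If the paper's setup ensures that $\phi$ itself consists of jet coordinates, as in the recursive use, then Lemma~\ref{lem:log-jet-coordinate-small} applied at a higher jet order covers it directly.

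Second step: Proposition~\ref{prop:qsmall} now gives $T_{\pi_{B_k}\circ j_k^\delta(g)}(r,M)\le S_{f,H}(r)$, where $B_k=\St^0_A(X_k)$. The remaining issue is that the statement uses $G=\bigcap_l \St_A^0(X_l)$ rather than $B_k$. The chain $\St^0_A(X_l)$ is decreasing in $l$, since $X_{l+1}$ maps onto $X_l$ equivariantly. A decreasing chain of connected algebraic subgroups stabilizes, so $G=B_{l_0}$ for some $l_0$, and $G\subset B_k$ for every $k$.

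For $k\ge l_0$ we have $B_k=G$, and the result is immediate. For $k<l_0$, apply the previous paragraph at level $l_0$ to get smallness of $\pi_{l_0}\circ j^\delta_{l_0}(g)$ into $X_{l_0}/G$. Then use the $G$-equivariant projection $X_{l_0}\to X_k$, which descends to a morphism $X_{l_0}/G\to X_k/G=Y_k$ with $\pi_k\circ j_k^\delta(g)$ factoring through it. Then \cite[Proposition~2.5.20]{NWbook} gives $T_{\pi_k\circ j^\delta_k(g)}(r,M_k)=O\bigl(T_{\pi_{l_0}\circ j^\delta_{l_0}(g)}(r,M_{l_0})\bigr)\le S_{f,H}(r)$. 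The bound holds for ample $M_k$, and hence for any line bundle, since such a bundle is bounded by a multiple of an ample one up to $O(1)$.
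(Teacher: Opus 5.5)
Your proposal is correct and follows essentially the paper's route: the paper also invokes Proposition~\ref{prop:qsmall} to get smallness of $q_G\circ f$, and then notes that the remaining coordinates of $\pi_k\circ j_k^{\delta}(g)$ are finitely many jet coordinates of small characteristic. Your extra steps make explicit what the paper's two-line proof leaves implicit: the stabilization argument (applying Proposition~\ref{prop:qsmall} to $j_{l_0}^{\delta}(g)$ at a level where $\St_A^0(X_{l_0})=G$), the reduction of directional coordinates to those of Lemma~\ref{lem:log-jet-coordinate-small}, and the separate logarithmic-derivative treatment of $\delta^m\phi$.
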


\begin{proof}
Apply Proposition~\ref{prop:qsmall} to obtain
smallness of \(q_G\circ f\).  For any fixed \(k\), the coordinates of 
\(\pi_k\circ j_k^{\delta}(g)\) consist of \(q_G\circ f\) and
finitely many jet coordinates, all of which have
characteristic bounded by  \(S_{f, H}(r)\).
\end{proof}

\begin{lemma}\label{lem:nondominant}
Let \(Z\subset X_k\) be a closed subvariety.  If
\(\overline{\pi_k(Z)}\subsetneq Y_k\), then
\[
 N^{[1]}(r,j_k^{\delta}(g)^*Z)\leq S_{f, H}(r).
\]
\end{lemma}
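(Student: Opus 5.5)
Since $\pi_k(Z)$ is not Zariski dense in $Y_k$, the plan is to pass to the quotient and use a divisor on $Y_k$ containing $\pi_k(Z)$. Let $W=\overline{\pi_k(Z)}$, a proper closed subset of $Y_k$. Fix a projective compactification $\overline{Y_k}$ and an ample line bundle $M_k$ on it. For $m$ large enough, we can pick a nonzero section $\sigma\in H^0(\overline{Y_k},M_k^{m})$ that vanishes on the closure of $W$ and not identically on $\overline{Y_k}$, since $W$ is proper and $M_k$ is ample. Let $E$ be the effective divisor of $\sigma$. Then $Z\subset\pi_k^{-1}(W)\subset\pi_k^{-1}(E)$ as sets.

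The second step is to show that the composite $\pi_k\circ j_k^{\delta}(g)$ does not land inside $E$. The map $j_k^{\delta}(g)$ has Zariski-dense image in $X_k$ by definition, and $\pi_k$ is dominant onto $Y_k$. Hence $\pi_k\circ j_k^{\delta}(g)(\mathbb C^p)$ is Zariski dense in $Y_k$, so it is not contained in $\operatorname{Supp}E$. Consequently the pull-back divisor $(\pi_k\circ j_k^{\delta}(g))^*E$ is well defined.

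Next comes the pointwise comparison of counting functions. At each point of $\mathbb C^p$ where $j_k^{\delta}(g)$ meets $Z$, the map $\pi_k\circ j_k^{\delta}(g)$ meets $E$. The reduced support of $j_k^{\delta}(g)^*Z$, which is the analytic set $j_k^{\delta}(g)^{-1}(Z)$, is therefore contained in the support of $(\pi_k\circ j_k^{\delta}(g))^*E$. Truncation at level one sees only the codimension-one part of this support, each component counted with multiplicity $1$. This gives
\[
N^{[1]}\bigl(r,j_k^{\delta}(g)^*Z\bigr)\le N^{[1]}\bigl(r,(\pi_k\circ j_k^{\delta}(g))^*E\bigr)\le N\bigl(r,(\pi_k\circ j_k^{\delta}(g))^*E\bigr).
\]
Here the left side is interpreted, as in the paper, as the truncated counting function of the codimension-one part of the preimage.

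The First Main Theorem on $\mathbb C^p$ then bounds the right-hand side by $T_{\pi_k\circ j_k^{\delta}(g)}(r,M_k^m)+O(1)=m\,T_{\pi_k\circ j_k^{\delta}(g)}(r,M_k)+O(1)$. By Lemma~\ref{cor:commonq} this is at most $S_{f,H}(r)$, because $m$ is a fixed constant depending only on $Z$ and $k$, and constant multiples of small terms remain small.

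The main obstacle I expect is bookkeeping rather than substance. First, the compactification and the rational map $\pi_k$ should be handled so that the First Main Theorem applies. Since $\pi_k\circ j_k^{\delta}(g)$ is holomorphic into the quasi-projective $Y_k$, this is fine. Second, the support inclusion should be checked carefully when $Z$ has high codimension. If $j_k^{\delta}(g)^{-1}(Z)$ has no hypersurface components, the left side is $0$ and there is nothing to prove.
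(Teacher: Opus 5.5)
Your proposal is correct and follows the same route as the paper: choose an effective divisor $E$ on a compactification of $Y_k$ containing $\overline{\pi_k(Z)}$, bound $N^{[1]}(r,j_k^{\delta}(g)^*Z)$ by $N\bigl(r,(\pi_k\circ j_k^{\delta}(g))^*E\bigr)$, apply the First Main Theorem, and conclude with Lemma~\ref{cor:commonq}. The extra details you supply are valid points that the paper leaves implicit: the construction of $E$ from a power of an ample bundle, and the Zariski density of $\pi_k\circ j_k^{\delta}(g)$ so that $(\pi_k\circ j_k^{\delta}(g))^*E$ is defined.
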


\begin{proof}
Choose an effective divisor \(E\) on a projective compactification
of \(Y_k\) containing \(\overline{\pi_k(Z)}\). We have 
$$N^{[1]}(r,j_k^{\delta}(g)^*Z)\leq N(r,(\pi_k\circ j_k^{\delta}(g))^*E)\leq T_{\pi_k\circ j_k^{\delta}(g)}(r,E).$$
Lemma~\ref{cor:commonq} gives the assertion.
\end{proof}

We establish a higher codimension estimate for the directional jet lifting of a holomorphic map. This is a key technical ingredient in the proof of the main theorem. Our proof follows the higher codimensional argument of Noguchi--Winkelmann--Yamanoi; see Theorem~5.1 and Lemmas~5.20--5.21 of \cite{NWY2008}, together with Proposition~2.1.1 of Yamanoi \cite{Y04}.

\begin{proposition}\label{thm:HC}
Let $g=(f,\phi):\mathbb{C}^p\rightarrow A \times \mathbb{A}^N$ be a holomorphic map with $T_{\phi}(r)\leq S_{f, H}(r)$ and let \(Z\subset X_k=\overline{j_k^{\delta}(g)(\mathbb C^p)}^{\,\Zar}\) be a subvariety of
\(\codim_{X_k}Z\geq2\). Suppose that $\delta f\not\equiv0$ and every prime divisor in $j_k^{\delta}(g)^*Z$ is transverse to $\delta$ at its general points. Then, for every
\(\eps>0\),
\begin{equation}\label{eq:HC}
 N^{[1]}(r,j_k^{\delta}(g)^*Z)
 \leq_{\mathrm{exc}}\eps T_f(r,H).
\end{equation}
\end{proposition}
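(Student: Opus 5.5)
We follow the one-variable scheme of \cite[Theorem~5.1, Lemmas~5.20--5.21]{NWY2008} and \cite[Proposition~2.1.1]{Y04}. The directional jet $j_l^{\delta}(g)$ replaces the curve jet, and Lemmas~\ref{cor:commonq} and~\ref{lem:nondominant} supply the smallness inputs. The proof goes by induction on $\dim X_k$, or equivalently on the pair consisting of $\dim X_k$ and the stabilizer $G$. First we dispose of the degenerate case. If $\overline{\pi_k(Z)}\subsetneq Y_k$, then Lemma~\ref{lem:nondominant} gives $N^{[1]}(r,j_k^{\delta}(g)^*Z)\le S_{f,H}(r)$, which is stronger than \eqref{eq:HC}. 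So we may assume $Z$ dominates $Y_k$. Since $\pi_k$ is a quotient by $G$, this means $Z$ meets a general $G$-orbit fiber in codimension $\ge 2$. We may also replace $Z$ by each of its irreducible components.

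\textbf{Main step: auxiliary divisors.} Fix a large $l\ge k$ and let $\rho_{l,k}:X_l\to X_k$ be the projection. On the jet level, consider the contact subscheme $Z_l\subset X_l$ of points whose directional $l$-jet lies over $Z$ to order $l$. Concretely, $Z_l$ is cut out by the vanishing of $\delta^i(h\circ j_k^{\delta}(g))$ for $i\le l-k$ and $h\in I_Z$. It is thickened only along the single direction $\delta$, so its generic rank over $\rho_{l,k}^{-1}(Z)$ is about $l$ and not $\binom{l+p}{p}$. This is exactly why the directional jet is used.

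Next we count sections. Take an ample $M$ on a compactification of the $G$-fiber of $X_l$. Then $h^0(M^n)\asymp n^{b}$, where $b=\dim$ of the fiber, while sections on a codimension-$\ge2$ subscheme of multiplicity about $l$ number $O(l\,n^{b-2})$. With $n=n(l)=\lfloor l^{3/4}\rfloor$ we get $l/n^2\to 0$. Hence we obtain a nonzero section $\sigma$ of $\mathcal O(nM)$, twisted by the pullback of a line bundle from $Y_l$, which vanishes on $Z_l$. We must also ensure $\sigma\circ j_l^{\delta}(g)\not\equiv0$. This is arranged by the induction: if it failed, then $j_l^{\delta}(g)$ would land in a proper subvariety and the stabilizer or dimension would drop.

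\textbf{Counting.} Let $D_\sigma=(j_l^{\delta}(g))^*\mathrm{div}\,\sigma$. At a general point of a prime divisor $E\subset j_k^{\delta}(g)^*Z$, the hypothesis that $E$ is transverse to $\delta$ lets us restrict to a complex line $t\mapsto z+ta$. On that line the one-variable function $h\circ j_k^{\delta}(g)(z+ta)$ vanishes, and the vanishing of $\sigma$ on $Z_l$ forces $\sigma\circ j_l^{\delta}(g)(z+ta)$ to vanish to order $\ge l-k+1$ in $t$. Transversality makes this directional order equal to the divisorial order along $E$, so $\ord_E D_\sigma\ge l-k+1$. The First Main Theorem then gives
\[
(l-k+1)\,N^{[1]}(r,j_k^{\delta}(g)^*Z)\le N(r,D_\sigma)\le n\,T_{j_l^{\delta}(g)}(r,M)+T_{\pi_l\circ j_l^{\delta}(g)}(r,\cdot)+O(1).
\]
The jet coordinates of $j_l^{\delta}(g)$ are linear combinations of the $p$-germ jet coordinates, so Lemma~\ref{lem:log-jet-coordinate-small} yields $T_{j_l^{\delta}(g)}(r,M)\le O(T_f(r,H))+S_{f,H}(r)$. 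Lemma~\ref{cor:commonq} bounds the $Y_l$ term by $S_{f,H}(r)$. Dividing, we get $N^{[1]}\le \frac{Cn}{l}T_f(r,H)+S_{f,H}(r)$, and since $n/l\to0$ this gives \eqref{eq:HC} after choosing $l$ large relative to $\eps$.

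\textbf{Main obstacle.} The hard part is the construction of $\sigma$. We need a uniform multiplicity estimate for the contact scheme $Z_l$ on the $G$-fibers, namely that its length is $O(l)$ generically over its codimension-$\ge2$ support, uniformly in $l$. We also need nonvanishing of $\sigma$ along $j_l^{\delta}(g)$. For the multiplicity estimate I would first try to transfer Yamanoi's one-variable computation by noting that $\delta$ is a constant vector field, so $j_l^{\delta}(g)$ is literally the curve jet of each restriction $t\mapsto g(z+ta)$. For nonvanishing, I would use the hypothesis $\delta f\not\equiv0$ together with the stabilization of $G$ as $l$ grows.
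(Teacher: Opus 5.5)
You have the paper's architecture right: discard $Z$ not dominating the quotient, build auxiliary divisors of fibre degree $n(l)=o(l)$ twisted by a pullback from the quotient, get contact order $\ge l+1$ along the lines $t\mapsto x+ta$, convert it to divisorial order by transversality, and finish with the First Main Theorem and quotient smallness. There is, however, a genuine gap.

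\textbf{Missing splitting step.} Your argument rests on a product structure you never establish. You treat an ample $M$ on a compactified $G$-fibre as a line bundle on a compactification of $X_l$ (you bound $T_{j^\delta_l(g)}(r,M)$). You also compare fibrewise section counts with a trivial bundle of rank $h^0(M^n)$ over $Y_l$. Both require $X_l\cong G\times (X_l/G)$ for all $l$. In general this fails: $X_l\to X_l/G$ is the pullback of the $G$-torsor $A\to A/G$, and the extension $0\to G\to A\to A/G\to0$ need not split. The first step of the paper's proof exists to fix exactly this:
\begin{enumerate}
\item take $B$ minimal among semi-abelian subvarieties with $T_{q^B\circ f}(r)=S_f(r)$ (nonempty by Lemma~\ref{cor:commonq}), and deduce $B\subset\bigcap_l\St^0_A(X_l)$;
\item choose $u$ with $q^B\circ u=q^B\circ f$ and $T_u(r)=S_f(r)$, and pass to the split $\tilde A=C'\times_C A\cong B\times C'$ with $\tilde f=(u,f)$;
\item use minimality again to see that $B$ still stabilizes every new $\tilde X_l$. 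This is what minimality buys; with your $G$ there is no reason for it to hold.
\end{enumerate}
Only after this are $\hat X_l=\bar B\times(X_l/B)$ and the bound $N(r,j^\delta_{k+l}(g)^*F_l)\le n(l)T_{f_B}(r,L)+S_f(r)$ available. The same holds for the normalization $f_B(x+ta)-f_B(x)$ used in the contact step.

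\textbf{Wrong contact scheme.} Your contact scheme does not do the job, and you leave its construction open. The expressions $\delta^i(h\circ j^\delta_k(g))$ are functions on $\mathbb C^p$, not equations on $X_l$. Read algebraically, as total derivatives of $h\in I_Z$, they cut out the jets whose $k$-jet curve stays in $Z$ to order $l-k$. Once $l-k$ exceeds the contact order of $t\mapsto j^\delta_k(g)(x+ta)$ with $Z$, that locus no longer contains the points $j^\delta_l(g)(x)$ you need. A thickening of $\rho_{l,k}^{-1}(Z)$ inside $X_l$ alone cannot follow the curve $t\mapsto j^\delta_l(g)(x+ta)$ either, since its tangent depends on higher jets.

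The paper (Sub-Lemma~\ref{incidence}) instead uses Yamanoi's scheme $\mathcal T_{k,l}\subset J_{k+l}\times J_k$. It keeps the base $k$-jet $w_0$ as a separate frozen coordinate, and is finite, homeomorphic and of rank $l+1$ over the regular $(k+l)$-jets. It is pulled back along a desingularization of $\bar Z$ over $Z^{\mathrm{ns}}$. Generic flatness and base change then give $(l+1)G_n$ generators with $G_n=O(n^{\dim B-2})$, against $I_n\asymp n^{\dim B}$. Your remark that constancy of $\delta$ makes $j^\delta_l(g)$ a genuine curve jet is the right reason this transfers.

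\textbf{Induction.} Nonvanishing of $\sigma\circ j^\delta_l(g)$ needs no induction. A nonzero section on the irreducible $\bar B\times Y_{k+l}$ cannot vanish on the Zariski-dense image of $j^\delta_{k+l}(g)$. The induction you actually need, on $Z\setminus Z^{\mathrm{ns}}$, is missing.
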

\begin{proof} We write $T_f(r)=T_f(r, H)$, and $S_f(r)=S_{f, H}(r)$.

Following the strategy in the proof of Theorem~5.1 of \cite{NWY2008}, we divide the proof into three steps.

\noindent{\bf $\bullet$ Reduction.} 

We first reduce the proof to the case where $A$ admits a splitting $A=B\times C$, with $B$ and $C$ semi-abelian subvarieties, such that
 \begin{align}
\label{5.2}
B &\subset \cap_{l \geq 0} \mathrm{St}_A^0(X_l)
\end{align}
and the composition of $f$ and the second projection $q^B: A\rightarrow A/B=C$ satisfies 
\begin{align}\label{5.3}
T_{q^B\circ f}(r) &=S_f(r).
 \end{align}

This splitting becomes essential a little later, where we explicitly use
$A\cong B\times C$ to deduce $X_{ l }\cong B\times (X_{ l }/B).$

 Let $\mathcal{B}$ be the set of all semi-abelian subvarieties
$B\subset A$ such that
\[
T_{q^B\circ f}(r)=S_f(r).
\]
By Lemma~\ref{cor:commonq}, we have
$\bigcap_{l\geq 0}\St_A^0(X_l)\in\mathcal{B}$, and hence
$\mathcal{B}\neq\emptyset$. Let $B\in\mathcal{B}$ be a minimal element,
that is, if $B'\subset B$ and $B'\in\mathcal{B}$, then $B'=B$.
If $B_1,B_2\in\mathcal{B}$, the finite morphism
$A/(B_1\cap B_2)^0\longrightarrow A/B_1\times A/B_2$
implies that $(B_1\cap B_2)^0\in\mathcal{B}$. Applying this with
$B_1=B$ and
$B_2=\bigcap_{l\geq 0}\St_A^0(X_l)$, the minimality of $B$ gives
\[
B\subset \bigcap_{l\geq 0}\St_A^0(X_l).
\]

Our strategy is, by changing  the pair $A, f$ to $\tilde{A}, \tilde{f}$,  to replace the possibly non-split extension
$$0\rightarrow B\rightarrow A\rightarrow C=A/B\rightarrow 0$$
by a split one, without changing the Nevanlinna problem in any essential way. The construction works as follows.
By the proof of Lemma 5.7 in \cite{NWY2008} (or Lemma 6.5.25 in \cite{NWbook}) we may take a holomorphic map
$u:\mathbb C^p \to A$ such that $q^B \circ u=q^B \circ f$ and 
\begin{equation}
\label{eqn:3081}
T_{u}(r) = S_{f}(r).
\end{equation}

We may assume that the Zariski closure of $u(\mathbb{C}^p)$ is a
semi-abelian subvariety $C'\subset A$ (see \cite{WangCp}). Define the
semi-abelian variety $\tilde A$ by the following base change.
\begin{equation*}
\begin{CD}
\tilde{A}@>p_2>> A \\
@Vp_1VV   @VV{q^B}V \\
C' @>{q^B |_{C'}}>>  C  
\end{CD}
\end{equation*}
Then $\tilde A=\{(c,a)\in C'\times A:q^B(c)=q^B(a)\}$.
The inclusion map $i:C'\to A$ induces a map
$\tau:C'\to\tilde A$ defined by $\tau(x)=(x,i(x))$.
Note that this morphism $\tau$ 
is a section for $p_1:\tilde{A}\to C'$.
Hence this bundle is trivial, i.e. 
$\tilde{A}\cong B\times C'$ and $\tilde{A}/B=C'$.

Put $\tilde{f}=(u,  f):\mathbb C^p \to \tilde{A}$. It is easy to see $\tilde{f}$ has Zariski dense image in $\tilde{A}$ and $\delta\tilde{f}\not\equiv0$.
By \eqref{eqn:3081} we have
\begin{align}
\label{eqn:3093}
T_{f}(r)&\asymp T_{\tilde{f}} (r),\\
\label{eqn:3091}
T_{p_1\circ \tilde{f}} (r) &=  S_{\tilde{f}}(r) .
\end{align}
Set $\tilde{g}=(\tilde{f},\phi):\mathbb{C}^p \rightarrow \tilde{A}\times\mathbb{A}^N$ and denote  $\tilde{X_l}=\overline{j_l^{\delta}(\tilde{g})(\mathbb C^p)}^{\,\Zar}$.

There is one subtlety: after replacing $A, f$ by $\tilde{A}, \tilde{f}$, we must ensure that the same $B$ still lies in the stabilizers of all the new jet closures $\tilde{X_l}$.
To do so, set
\begin{equation}\label{eqn:3095}
B'=\left(B\cap(\cap_{ l \geq0}
\St_{\tilde A}^0(\tilde X_ l ))
\right)^0
\end{equation}
and $p_1':\tilde{A}\to \tilde{A}/B'$ to be the quotient map.
By \eqref{eqn:3091}, we have
\begin{equation}
\label{eqn:3094}
T_{p_1' \circ \tilde{f}}(r) = S_{\tilde{f}}(r).
\end{equation}
Let $q^{B'}:A\to A/B'$ be the quotient map.
Then we have
\begin{equation}\label{eqn:3092}
T_{q^{B '} \circ f}(r) = O(T_{p_1' \circ \tilde{f}}(r)).
\end{equation}
Hence by \eqref{eqn:3093}, \eqref{eqn:3094} and \eqref{eqn:3092},
we conclude $B'\in \mathcal{B}$.
Since $B$ is minimal in $\mathcal{B}$, we get $B'=B$.
By \eqref{eqn:3095}, we have $B \subset \cap_{l\geq 0}
\St_{\tilde{A}}^0(\tilde{X_l})$.

Let $p_{2,k}:\tilde X_k\to X_k$ be the morphism induced by
$p_2:\tilde A\to A$, and set
\[
\tilde Z=p_{2,k}^{-1}(Z)\subset\tilde X_k.
\]
Then
\[
N^{[1]}\bigl(r,j_k^\delta(g)^*Z\bigr)
=
N^{[1]}\bigl(r,j_k^\delta(\tilde g)^*\tilde Z\bigr).
\]

Let $\pi_k: X_k \to X_k/B$ be the quotient map. We also can reduce the proof to the case that the image $\pi_k(Z)$ is  Zariski dense in $X_k/B$. 
Indeed,  assume that the image $\pi_k(Z)$ is not Zariski dense in $X_k/B$,
there is an effective Cartier divisor $E$ on
$X_k/B$ containing $\pi_k(Z)$.
Then we get
\begin{align}
N^{[1]}(r, j_k^{\delta}(g)^*Z) &\leq N(r, (\pi_k\circ j_k^{\delta}(g))^* E)=
O(T_{\pi_k\circ j_k^{\delta}(g)} (r))\leq S_f(r).
\nonumber
\end{align}
Therefore, the proof of Proposition \ref{thm:HC} is finished in this case.   Hence we assume that $\pi_k(Z)$ is Zariski dense in $X_k/B$, and hence, has relative dimension at most $\dim B -2$.

We have that
$$
B \subset \left(\cap_{l \geq 0} \St_A^0(X_l)\right) \cap
\left(\cap_{l \geq 0} \St_{\tilde A}^0(\tilde{X_l})\right) .
$$
Thus $p_{2,k}:\tilde{X_k}  \to X_k$ is $B$-equivariant and
induces a morphism
$p^B_{2,k}: \tilde{X_k}/B \to {X_k}/B.$
Therefore every irreducible component of $\tilde{Z}$ which dominates $\tilde{X_k}/B$ has general fiber dimension at most $\dim B -2$. For an irreducible component $W$ with nondense image in $\tilde{X_k}/B$, the preceding argument gives $N^{[1]}(r, j_k^{\delta}(\tilde{g})^*W) = S_f(r)$.

Hence, by replacing
 $A$ by $\tilde{A}$, $C$ by $C'$, $f$ by $\tilde{f}$ and
$Z$ by an irreducible component of $p_{2,k}^{-1}(Z)$ which dominates $\tilde{X_k}/B$, we achieved our reduction.

 To summarize, we can then assume the following in the sequel: 

\noindent
(i)  There is a semi-abelian subvariety $B \subset A$ satisfying
\begin{align}
\label{5.2}
B &\subset \cap_{l \geq 0} \mathrm{St}_A^0(X_l),\\
\label{5.3}
T_{q^B\circ f}(r) &=S_f(r),\\
\label{5.4}
A &\cong B \times (A/B),
\end{align}
where $q^B:A \to A/B$ is the quotient map.

\noindent
(ii)  $\pi_k(Z)$ is Zariski dense in $X_k/B$.

\noindent{\bf $\bullet$ The Auxiliary divisor $F_l$ and the main sub-lemma.} 

To continue, we choose an auxiliary divisor. Let the notation and assumptions be
as above, and set $C=A/B$. We have
\begin{equation}
\label{5.5}
A\cong B\times C.
\end{equation}
This induces
\[
X_l\cong B\times (X_l/B),\qquad l\geq 0.
\]
Let $\bar B$ be an equivariant compactification of $B$, and put
$\hat X_l=\bar B\times (X_l/B)$. Denote by
\[
\hat\gamma_l:\hat X_l\to\bar B,\qquad
\hat\pi_l:\hat X_l\to X_l/B
\]
the natural projections. We denote by $Z^{\mathrm{ns}}$ the nonsingular locus
of $Z$.

\begin{sublemmam}
\label{lem:jyu}
Let $L \to \bar B$ be an ample line bundle.
Then there is a sequence of natural numbers
$n(1),n(2),n(3),\ldots$ satisfying the following:
\begin{enumerate}
\item
$\lim_{l \to \infty} \frac{n(l)}{l}= 0$.
\item
There exist effective Cartier divisors
$F_l\subset \hat{X}_{k+l}$ and line bundles
$M_l$ on $X_{k+l}/B$ such that
$F_l$ is defined by a non-zero element of
$$
H^0(\hat{X}_{k+l},\hat\gamma _{k+l}^*L^{\otimes n(l)}
\otimes (\hat\pi _{k+l})^{*}M_l)
$$
and that for every point $x\in \mathbb C^p$ with
$j_k^{\delta}(g)(x)\in Z^{\mathrm{ns}}$
$$
\ord_{t=0}{F_l(j_{k+l}^{\delta}(g)(x+ta))}\geq l+1.
$$
\end{enumerate}
\end{sublemmam}

To prove the above Main Sub-lemma, we need the following sub-lemma.
\begin{sublemma}\label{incidence}
For $k\geq0$, we have
$J_{k}(A\times\mathbb{A}^N)\simeq B\times\bigl(J_{k}(A\times\mathbb{A}^N)/B\bigr)$.
Let $\beta_{k}:J_{k}(A\times\mathbb{A}^N)\rightarrow B$ be the first projection, and $\varpi_{k}:J_{k}(A\times\mathbb{A}^N)\rightarrow J_{k}(A\times\mathbb{A}^N)/B$ be the second projection. Let $\pi_{m,k}:J_{m}(A\times\mathbb{A}^N)/B\rightarrow J_{k}(A\times\mathbb{A}^N)/B$ be the canonical morphism for $m\geq k$, and let
\begin{equation}\label{eq:support}
T_{k,l}=
\left\{(\eta,w)\in J_{k+l}(A\times\mathbb{A}^N)\times J_{k}(A\times\mathbb{A}^N)/B~|~
\beta_{k+l}(\eta)=0_B,
~
w=\pi_{k+l,k}\bigl(\varpi_{k+l}(\eta)\bigr)
\right\}.
\end{equation}
Let
$$
\lambda_{l}:J_{k+l}(A\times\mathbb{A}^N)\times J_{k}(A\times\mathbb{A}^N)/B
\longrightarrow J_{k+l}(A\times\mathbb{A}^N)/B,
~
(\eta,w)\longmapsto\varpi_{k+l}(\eta).
$$
There is a closed subscheme
$\mathcal T_{k,l}^{\dagger}
\subset
J_{k+l}(A\times\mathbb{A}^N)\times J_{k}(A\times\mathbb{A}^N)/B$
such that:
\begin{enumerate}
\item $\operatorname{Supp}\mathcal T_{k,l}^{\dagger}=T_{k,l}$.
\item The restriction
$\lambda_{l}'=\lambda_{l}|_{\mathcal T_{k,l}^{\dagger}}$ is finite,
is a homeomorphism on underlying topological spaces, and
$(\lambda_{l}')_*\mathcal O_{\mathcal T_{k,l}^{\dagger}}$ is locally free of
rank $l+1$ over 
$\bigl(J_{k+l}(A\times\mathbb{A}^N)/B\bigr)^{\mathrm{reg}}=C\times\mathbb{A}^N\times J_{k+l,A\times\mathbb{A}^N}^{\mathrm{reg}}$, where $J_{k+l,A\times\mathbb{A}^N}^{\mathrm{reg}}$ is the open locus of regular jets.
\item If $h:(\Delta,0)\to A\times\mathbb{A}^N$ is a holomorphic germ satisfying
$h_B(0)=0_B$, let
$
w_0=\varpi_k(j_kh(0)),
\widetilde{j_{k+l}h}(t)=(j_{k+l}h(t),w_0).
$
Then
\begin{equation}\label{eq:contact}
\operatorname{ord}_0
\bigl(\widetilde{j_{k+l}h}\bigr)^*
(\mathcal T_{k,l}^{\dagger})\geq l+1.
\end{equation}
\end{enumerate}
\end{sublemma}

\begin{proof}
Let $0_{A\times\mathbb{A}^N}=(0_B,0_{C\times\mathbb{A}^N})\in A\times\mathbb{A}^N$, and put
$J_{m,A\times\mathbb{A}^N}=J_m(A\times\mathbb{A}^N)_{0_{A\times\mathbb{A}^N}}.$
We have the trivializations
$$
J_m(A\times\mathbb{A}^N)\simeq A\times\mathbb{A}^N\times J_{m,A\times\mathbb{A}^N},
\qquad
J_m(A\times\mathbb{A}^N)/B\simeq C\times\mathbb{A}^N\times J_{m,A\times\mathbb{A}^N}.
$$
By applying Proposition~2.1.1 in \cite{Y04} (or Lemma 6.5.40 in \cite{NWbook}) to $A\times\mathbb{A}^N$ at $0_{A\times\mathbb{A}^N}$, we get the following:  Let
$\mathcal T_{k,l}
\subset
J_{k+l}(A\times\mathbb{A}^N)\times J_{k,A\times\mathbb{A}^N}$
be the nonreduced closed subscheme. Its support is
\[
\operatorname{Supp}\mathcal T_{k,l}
=
\left\{
\bigl(0_{A\times\mathbb{A}^N},\eta,\eta_{k+l, k}\bigr)~|~
\eta\in J_{k+l,A\times\mathbb A^N}
\right\},
\]
where $\eta_{k+l, k}\in J_{k,A\times\mathbb A^N}$ denotes the truncation of
$\eta$ to $k$-jet coordinate.

Let
$\lambda_{k,l}:\mathcal T_{k,l}\longrightarrow J_{k+l,A\times\mathbb{A}^N}$
be its jet projection. Then:
\begin{itemize}
\item $\lambda_{k,l}$ is finite and is a homeomorphism on the underlying
topological spaces of $\mathcal T_{k,l}$ and $J_{k+l,A\times\mathbb{A}^N}$.
\item
$(\lambda_{k,l})_*\mathcal O_{\mathcal T_{k,l}}$
is locally free of rank $l+1$ over
$J_{k+l,A\times\mathbb{A}^N}^{\mathrm{reg}}$.
\item For a holomorphic germ $f:(\Delta,0)\to A\times\mathbb{A}^N$ satisfying $f(0)=0_{A\times\mathbb{A}^N}$, the closed subscheme
$\mathcal T_{j_k(f)(0)}\subset J_{k+l}(A\times\mathbb{A}^N),$
which is the fiber of $\mathcal T_{k,l}$ over
$j_k(f)(0)\in J_{k,A\times\mathbb{A}^N},$
satisfies
\begin{equation}\label{eq:contact}
\operatorname{mult}_0
j_{k+l}(f)\cdot \mathcal T_{j_k(f)(0)}
\geq l+1.
\end{equation}

\end{itemize}

Define
$$
\Theta_l:
C\times\mathbb{A}^N\times J_{k+l}(A\times\mathbb{A}^N)\times J_{k,A\times\mathbb{A}^N}
\longrightarrow
J_{k+l}(A\times\mathbb{A}^N)\times J_k(A\times\mathbb{A}^N)/B
$$
by
$$
\Theta_l\bigl(c,a,(b,c',a',v),v'\bigr)
=
\bigl((b,c'+c,a+a',v),(c,a,v')\bigr),
$$
where $c,c'\in C$, $a,a'\in\mathbb{A}^N$, $v\in J_{k+l,A\times\mathbb{A}^N}$, $b\in B$, and $v'\in J_{k,A\times\mathbb{A}^N}$. It is easy to see that $\Theta_l$ is an isomorphism. Indeed, its inverse is given by
$$
\Theta_l^{-1}
\bigl((b,\widetilde c,\widetilde a,v),(c,a,v')\bigr)=\bigl(c,a,(b,\widetilde c-c,\widetilde a-a,v),v'\bigr), 
$$where $b\in B$, $c,\widetilde c\in C$, $a,\widetilde a\in\mathbb A^N$, $v\in J_{k+l,A\times\mathbb A^N}$, and $v'\in J_{k,A\times\mathbb A^N}$.
Set
$\mathcal T_{k,l}^{\dagger}=\Theta_l\bigl(C\times\mathbb{A}^N\times\mathcal T_{k,l}\bigr).$
Since $\Theta_l$ is an isomorphism, $\mathcal T_{k,l}^{\dagger}$ is a closed subscheme of
$J_{k+l}(A\times\mathbb{A}^N)\times J_k(A\times\mathbb{A}^N)/B$.
Moreover, the definition of $\Theta_l$ gives
$
\operatorname{Supp}\mathcal T_{k,l}^{\dagger}=T_{k,l}.
$
This is (i).

Let
$s:J_{k+l}(A\times\mathbb{A}^N)\times J_{k,A\times\mathbb{A}^N}\longrightarrow C\times\mathbb{A}^N$
be the projection induced by
$J_{k+l}(A\times\mathbb{A}^N)\simeq B\times C\times\mathbb{A}^N\times J_{k+l,A\times\mathbb{A}^N}.$
Denote
$\chi:C\times\mathbb{A}^N\times\mathcal T_{k,l}
\longrightarrow
C\times\mathbb{A}^N\times\mathcal T_{k,l}$
by
$\chi(c,a,\tau)=((c,a)+s(\tau),\tau)$, where $(c,a)\in C\times\mathbb{A}^N, \tau\in \mathcal T_{k,l}$.
This is an automorphism. Then we have $
\lambda_l' \circ
\left.\Theta_l\right|_{(C\times\mathbb{A}^N)\times\mathcal{T}_{k,l}}
=
\left(\operatorname{id}_{C\times\mathbb{A}^N}\times\lambda_{k,l}\right)
\circ\chi$.
Consequently, $\lambda_l'$ is isomorphic to
$\operatorname{id}_{C\times\mathbb{A}^N}\times\lambda_{k,l}$, which proves (ii).

\[
\begin{CD}
(C\times\mathbb{A}^N)\times\mathcal{T}_{k,l}
@>{\chi}>>
(C\times\mathbb{A}^N)\times\mathcal{T}_{k,l}
\\
@V{\left.\Theta_l\right|_{(C\times\mathbb{A}^N)
\times\mathcal{T}_{k,l}}}V{\simeq}V
@VV{\operatorname{id}_{C\times\mathbb{A}^N}\times\lambda_{k,l}}V
\\
\mathcal{T}_{k,l}^{\dagger}
@>{\lambda_l'}>>
(C\times\mathbb{A}^N)\times J_{k+l,A\times\mathbb{A}^N}.
\end{CD}
\]

Finally, let $h$ be as in (iii), and put
$\tilde{h}=h-h_{C\times\mathbb{A}^N}(0).$
Since $h_B(0)=0_B$, we have $\tilde{h}(0)=0_{A\times\mathbb{A}^N}$. Under
$J_k(A\times\mathbb{A}^N)/B
\simeq
C\times\mathbb{A}^N\times J_{k,A\times\mathbb{A}^N},$
one has
$w_0=\bigl(h_{C\times\mathbb{A}^N}(0),j_k\tilde{h}(0)\bigr).$
Moreover,
$\Theta_l\bigl(h_{C\times\mathbb{A}^N}(0),j_{k+l}\tilde{h}(t),j_k\tilde{h}(0)\bigr)=\bigl(j_{k+l}h(t),w_0\bigr).$
Since $\Theta_l$ is an isomorphism, applying \eqref{eq:contact} to $\tilde{h}$ gives
$$
\operatorname{ord}_0
\bigl(\widetilde{j_{k+l}h}\bigr)^*
(\mathcal T_{k,l}^{\dagger})
\geq l+1.
$$
This proves (iii).
\end{proof}

Now we prove the Main Sub-Lemma \ref{lem:jyu}.

\begin{proof}

Let \(\bar Z\) be the Zariski closure of \(Z\) in \(\widehat X_k\), and let \(r_1:Z^{\dagger}\to \bar Z\) be a desingularization of \(\bar Z\) which is an isomorphism over \(Z^{\mathrm{ns}}\). Put \(Y_k=X_k/B\). Consider the sequence of morphisms
\begin{equation}
\label{eqn:sqm}
Z^{\mathrm{ns}}\overset{r_0}{\hookrightarrow} Z^{\dagger}\overset{r_1}{\longrightarrow}\bar Z\overset{r_2}{\hookrightarrow}\widehat X_k\overset{\widehat\pi_k}{\longrightarrow}Y_k,
\end{equation}
where \(r_0\) and \(r_1\circ r_0\) are open immersions and \(r_2\) is a closed immersion. Put \(r=\widehat\pi_k\circ r_2\circ r_1:Z^\dagger\to Y_k\). By generic flatness, there exists a nonempty nonsingular Zariski open subset \(Y_k^{\mathrm{fl}}\subset Y_k\) such that \(r:r^{-1}(Y_k^{\mathrm{fl}})\to Y_k^{\mathrm{fl}}\) is flat. In particular, every fiber of \(r\) over \(Y_k^{\mathrm{fl}}\) has dimension \(\dim Z^\dagger-\dim Y_k\).

Consider the pullback of \eqref{eqn:sqm} by the natural projection \(B\times Y_k\to Y_k\):
$$
B\times Z^{\mathrm{ns}}\overset{s_0}{\hookrightarrow}B\times Z^{\dagger}\overset{s_1}{\longrightarrow}B\times\bar Z\overset{s_2}{\hookrightarrow}B\times\widehat X_k\overset{s_3}{\longrightarrow}B\times Y_k.
$$
Put \(s=s_3\circ s_2\circ s_1:B\times Z^{\dagger}\to B\times Y_k\). Then \(s(a,z)=(a,r(z))\). Let \(L\) be an ample line bundle on \(\bar B\) and define
\begin{equation}
\label{phi}
\mu:B\times\widehat X_k\longrightarrow\bar B,\qquad \mu(a,w)=a+\widehat\gamma_k(w).
\end{equation}
Let \(L_1^{\dagger}\) be the pullback of \(L\) by the composition \(B\times Z^{\dagger}\overset{s_2\circ s_1}{\longrightarrow}B\times\widehat X_k\overset{\mu}{\longrightarrow}\bar B\).

Since the restriction of \(s\) over \(B\times Y_k^{\mathrm{fl}}\) is flat, the upper semicontinuity theorem \cite[p.~288]{AG} implies that, for every \(n>0\), there is a nonempty Zariski open subset \(U_n\subset B\times Y_k^{\mathrm{fl}}\) such that $\dim_{\mathbb C}H^0\bigl((B\times Z^{\dagger})|_P,L_{1,P}^{\dagger\otimes n}\bigr)$
is constant for \(P\in U_n\). Put this constant by \(G_n\). Here \((B\times Z^{\dagger})|_P\) denotes the fiber of \(s:B\times Z^{\dagger}\to B\times Y_k\) over \(P\), and \(L_{1,P}^{\dagger\otimes n}\) denotes the induced line bundle. Since \(\bigcap_{n\geq1}U_n\neq\varnothing\), choose \((a,y)\in\bigcap_{n\geq1}U_n\). Replacing \(L\) by its pullback under the translation \(B\ni x\mapsto x+a\in B\), we may assume that \(a=0_B\).

Now for a positive integer $l>0$, let $\mathcal T_{k,l}^{\dagger} \subset J_{k+l}(A\times\mathbb{A}^N)\times J_{k}(A\times\mathbb{A}^N)/B$ be the closed subscheme, and let $
\lambda_{l}:J_{k+l}(A\times\mathbb{A}^N)\times J_{k}(A\times\mathbb{A}^N)/B\longrightarrow J_{k+l}(A\times\mathbb{A}^N)/B,
$ be the morphism in Sub-Lemma $\ref{incidence}$. Then $\lambda_l$ has the following properties;
\begin{enumerate}
\item $\operatorname{Supp}\mathcal T_{k,l}^{\dagger}=T_{k,l}$.
\item The restriction
$\lambda_{l}'=\lambda_{l}|_{\mathcal T_{k,l}^{\dagger}}$ is finite,
is a homeomorphism on underlying topological spaces, and
$(\lambda_{l}')_*\mathcal O_{\mathcal T_{k,l}^{\dagger}}$ is locally free of
rank $l+1$ over $\bigl(J_{k+l}(A\times\mathbb{A}^N)/B\bigr)^{\mathrm{reg}}$.
\item If $h:(\Delta,0)\to A\times\mathbb{A}^N$ is a holomorphic germ satisfying
$h_B(0)=0_B$, and
$
w_0=\varpi_k(j_kh(0)),
\widetilde{j_{k+l}h}(t)=(j_{k+l}h(t),w_0),
$
then
\begin{equation}\label{eq:contact2}
\operatorname{ord}_0
\bigl(\widetilde{j_{k+l}h}\bigr)^*
(\mathcal T_{k,l}^{\dagger})\geq l+1.
\end{equation}
\end{enumerate}

Since $Y_{k+l}$ is a Zariski closed subset of $C\times\mathbb{A}^N\times J_{k+l,A\times\mathbb{A}^N}$, we have the closed immersion 
\begin{equation}
\label{eqn:503}
\begin{aligned}
B\times Y_{k+l}\times Y_k
&\subset
B\times C\times\mathbb{A}^N
\times J_{k+l,A\times\mathbb{A}^N}
\times C\times\mathbb{A}^N
\times J_{k,A\times\mathbb{A}^N}
\\
&=
J_{k+l}(A\times\mathbb{A}^N)
\times
\bigl(J_k(A\times\mathbb{A}^N)/B\bigr).
\end{aligned}
\end{equation}

Let $\mathcal{S}_l\subset B\times Y_{k+l}\times Y_{k}$
be the closed subscheme obtained by the pullback of
$\mathcal{T}_{k,l}^{\dagger}$ by \eqref{eqn:503}.
Let $q:\mathcal{S}_l\to Y_{k+l}$ be the composition with
the second projection
$B\times Y_{k+l}\times Y_{k}\to Y_{k+l}$.
We put
$$
Y_{k+l}^{\mathrm{reg}}=Y_{k+l}\cap (C\times\mathbb{A}^N\times J_{k+l,A\times\mathbb{A}^N}^{\mathrm{reg}}),
$$
 which is the Zariski open subset of $Y_{k+l}$. Since $\delta f\not\equiv0$, we obtain $Y_{k+l}^{\mathrm{reg}}\not=\emptyset $. 
Then by the above properties of $\lambda$,
we have the corresponding properties for $q$;
\begin{enumerate}
\item
$q$ is finite,
\item
the direct image sheaf $q _*\mathcal{O}_{\mathcal{S} _l}$
is locally generated by $l+1$ elements as 
$\mathcal{O}_{Y_{k+l}}$-module on $Y_{k+l}^{\mathrm{reg}}$,
\item $q$ gives the isomorphism of underlying topological spaces of
$\mathcal{S}_l$ and $Y_{k+l}$.
\end{enumerate}

We consider the following commutative diagram \eqref{eqn:cdm}
obtained by the base change of \eqref{eqn:sqm} with a sequence
of morphisms
$$
\mathcal{S}_l \hookrightarrow  B\times Y_{k+l}\times Y_{k}\to
B\times Y_{k}\to Y_{k}.
$$
Here $B\times Y_{k+l}\times Y_k\longrightarrow B\times Y_k$ is the natural projection.
\begin{equation}
\label{eqn:cdm}
\begin{CD}
\mathcal{Z}_l^\mathrm{ns} @>>> B\times Y_{k+l}\times Z^\mathrm{ns}@>>>
B\times Z^\mathrm{ns}@>>> Z^\mathrm{ns}\\
@VVu_0V @VVt_0V @VVs_0V @VVr_0V \\
\mathcal{Z}_l^{\dagger} @>>> B\times Y_{k+l}\times Z^{\dagger}@>>>
B\times Z^{\dagger}@>>> Z^{\dagger}\\
@VVu_1V @VVt_1V @VVs_1V @VVr_1V \\
\mathcal{Z}_l @>v'>> B\times Y_{k+l}\times \bar{Z}@>>>
B\times \bar{Z}@>>> \bar{Z}\\
@VVu_2V @VVt_2V @VVs_2V @VVr_2V \\
\cdot @>>> B\times Y_{k+l}\times \hat{X}_{k}@>>>
B\times \hat{X}_{k}@>>> \hat{X}_{k}\\
@VVu_3V @VVt_3V @VVs_3V @VV \hat\pi_k V \\
\mathcal{S} _l @>v>> B\times Y_{k+l}\times Y_{k}@>>>
B\times Y_{k}@>>> Y_{k}
\end{CD}
\end{equation}
Let \(\mathcal{L}_l^{\dagger}\) be the line bundle on \(\mathcal{Z}_l^{\dagger}\) obtained by the pullback of \(L_1^{\dagger}\) by the morphisms in \eqref{eqn:cdm}, and let \(\mathcal{S}_{l,n}\) be the nonempty Zariski open subset of \(\mathcal{S}_l\) obtained as the inverse image of \(U_n\). Since \(\dim H^0((B\times Z^{\dagger})|_P,L_{1,P}^{\dagger\otimes n})=G_n\) for \(P\in U_n\), Grauert's theorem \cite[p.~288]{AG} implies that \(s_*L_1^{\dagger\otimes n}\) is locally free of rank \(G_n\) on \(U_n\) and that the natural map
$$
s_*L_1^{\dagger\otimes n}\otimes\mathbb C(P)\longrightarrow H^0((B\times Z^{\dagger})|_P,L_{1,P}^{\dagger\otimes n})
$$
is an isomorphism for \(P\in U_n\).

Put \(u=u_3\circ u_2\circ u_1:\mathcal{Z}_l^{\dagger}\to\mathcal{S}_l\), where \(u_1,u_2,u_3\) are the morphisms in \eqref{eqn:cdm}. By the theorem of cohomology and base change \cite[p.~290]{AG}, the natural map
$$
u_*\mathcal{L}_l^{\dagger\otimes n}\otimes\mathbb C(P)\longrightarrow H^0(\mathcal{Z}_l^{\dagger}|_P,\mathcal{L}_{l,P}^{\dagger\otimes n})
$$
is an isomorphism for \(P\in\mathcal{S}_{l,n}\). It follows that $u_*\mathcal{L}_l^{\dagger\otimes n}$ is locally generated by $G_n$ elements as an $\mathcal{O}_{\mathcal{S}_l}$-module on $\mathcal{S}_{l,n}\subset\mathcal{S}_l$. Put \(Y_{k+l,n}=q(\mathcal{S}_{l,n})\). Since the underlying topological spaces of \(\mathcal{S}_l\) and \(Y_{k+l}\) are the same, \(Y_{k+l,n}\) is a nonempty Zariski open subset of \(Y_{k+l}\). By the above properties of \(q\), $(q\circ u)_*\mathcal{L}_l^{\dagger\otimes n}$ is locally generated by $(l+1)G_n$ elements as an $\mathcal{O}_{Y_{k+l}}$-module on $Y_{k+l,n}\cap Y_{k+l}^{\mathrm{reg}}$. Here, since \(Y_{k+l}^{\mathrm{reg}}\neq\emptyset\) and \(Y_{k+l}\) is irreducible, \(Y_{k+l,n}\cap Y_{k+l}^{\mathrm{reg}}\neq\emptyset\).

Now look at the following commutative diagram
\begin{equation*}
\begin{CD}
\mathcal{Z}_l^\mathrm{ns}\\
@VVu_0V \\
\mathcal{Z}_l ^{\dagger}@>t_2\circ v'\circ u_1>>
B\times Y_{k+l}\times \hat{X}_{k} @>\psi >>
\bar{B}\times Y_{k+l}@>\rho >> \bar{B}\\
@VVq\circ uV @VV \text{2nd proj} V @VV\tau V \\
Y_{k+l} @= Y_{k+l} @=  Y_{k+l}
\end{CD}
\end{equation*}
where \(\rho\) is the first projection, \(\tau\) is the second projection, and
$$
\psi:B\times Y_{k+l}\times\hat X_k\longrightarrow \bar B\times Y_{k+l}
$$
is the morphism defined by \(\psi(a,v,w)=\bigl(a+\hat\gamma_k(w),v\bigr)\).
Since
$(\rho \circ \psi \circ t_2\circ v'\circ u_1)^*L=\mathcal{L}_l^{\dagger}$,
we have a natural morphism
\begin{equation}
\label{eqn:530}
\tau _{*}\rho ^{*}L^{\otimes n}=
H^{0}(\bar{B},L^{\otimes n})\otimes _{\mathbb{C}  }
\mathcal{O}_{Y_{k+l}} \to (q\circ u)_{*}\mathcal{L}_l^{\dagger \otimes n}.
\end{equation}
Here, note that $\rho \circ \psi =\mu \circ \beta$ where
$\beta :B\times Y_{k+l}\times \hat{X}_{k}\to B\times \hat{X}_{k}$
is the morphism in the diagram \eqref{eqn:cdm} and
$\mu$ was defined by (\ref{phi}).

Now put \(I_n=\dim_{\mathbb C}H^0(\bar B,L^{\otimes n})\). Then there exist a positive integer \(n_0\) and positive constants \(C_1,C_2\) such that
$$
I_n>C_1n^{\dim\bar B},\qquad G_n<C_2n^{\dim\bar B-2}
$$
for \(n>n_0\). Note that \(G_n=\dim_{\mathbb C}H^0((B\times Z^\dagger)|_P,L_{1,P}^{\dagger\otimes n})\) for \(P\in\bigcap_{n\geq1}U_n\). By \(\operatorname{codim}_{\hat X_k}\bar Z\geq2\) and the dominance of \(\hat\pi_k\circ r_2:\bar Z\to Y_k\), the fiber \((B\times Z^\dagger)|_P=s^{-1}(P)\) has dimension at most \(\dim\bar B-2\). Hence the above estimate for \(G_n\) follows.
 Hence, for a positive integer \(l\), we can choose a positive integer \(n(l)\), for example \(n(l)\sim l^{3/4}\), such that
$$
I_{n(l)}>(l+1)G_{n(l)},\qquad \lim_{l\to\infty}\frac{n(l)}{l}=0.
$$
Let \(\mathcal F\) be the kernel of \eqref{eqn:530} for \(n=n(l)\). Then we have an exact sequence
$$
0\to\mathcal F\to\tau_*\rho^*L^{\otimes n(l)}\to(q\circ u)_*\mathcal L_l^{\dagger\otimes n(l)}.
$$
It follows from \(I_{n(l)}>(l+1)G_{n(l)}\) that \(\mathcal F\neq0\). By tensoring \(\mathcal F\) with a sufficiently ample line bundle \(M_l\) on \(Y_{k+l}\), we may assume that \(H^0(Y_{k+l},\mathcal F\otimes M_l)\neq0\).

Since we have
\begin{equation*}
\begin{split}
H^0(Y_{k+l},\mathcal{F} \otimes M_l)&\subset
H^0(Y_{k+l},(\tau _{*}\rho ^{*}L^{\otimes n(l)})\otimes M_l)\\
&=H^0(Y_{k+l},\tau _{*}(\rho ^{*}L^{\otimes n(l)}\otimes \tau ^*M_l))\\
&=H^0(\bar{B}\times Y_{k+l},\rho ^{*}L^{\otimes n(l)}\otimes \tau ^*M_l),
\end{split}
\end{equation*}
we may take a divisor \(F_l\subset \bar B\times Y_{k+l}\) defined by a non-zero global section of \(H^0(Y_{k+l},\mathcal F\otimes M_l)\). Then
$$
\mathcal Z_l^{\mathrm{ns}}\subset \psi^*F_l.
$$
Here \(\mathcal Z_l^{\mathrm{ns}}\subset\mathcal Z_l\) is an open subscheme, and
$\mathcal Z_l\overset{t_2\circ v'}{\hookrightarrow}B\times Y_{k+l}\times\hat X_k$ is a closed subscheme.

Using the decomposition \(A=B\times C\), let \(f_B:\mathbb C^p\to B\) and \(f_C:\mathbb C^p\to C\) be the compositions of \(f\) with the first and second projections of \(A=B\times C\), respectively.

Under the identification \(X_m=B\times Y_m\), we have
$j_m^\delta(g)=(f_B,\hat\pi_m\circ j_m^\delta(g)).$
Fix \(x\in\mathbb C^p\) such that
$j_k^\delta(g)(x)\in Z^{\mathrm{ns}},$
and define the holomorphic germ
\[
\widetilde h_x:(\Delta,0)
\longrightarrow B\times Y_{k+l}\times Z^{\mathrm{ns}}
\]
by
\[
\widetilde h_x(t)
=
\left(
f_B(x+ta)-f_B(x),
\hat\pi_{k+l}\circ j_{k+l}^\delta(g)(x+ta),
j_k^\delta(g)(x)
\right).
\]
Since
$\hat\pi_k\circ j_k^\delta(g)(x)=\pi_{k+l,k}\bigl(\hat\pi_{k+l}\circ j_{k+l}^\delta(g)(x)\bigr),$
we have
$\widetilde h_x(0)\in\operatorname{Supp}\mathcal Z_l^{\mathrm{ns}}.$
Moreover,
$(\psi\circ\widetilde h_x)(t)=j_{k+l}^\delta(g)(x+ta)$.

Let \(\iota_l\) denote the closed immersion in \eqref{eqn:503}, and let
$\vartheta:B\times Y_{k+l}\times Z^{\mathrm{ns}}\longrightarrow B\times Y_{k+l}\times Y_k$
be the morphism induced by the natural projection
\(Z^{\mathrm{ns}}\to Y_k\). We have
\begin{equation}\label{eq:order-base-change}
\operatorname{ord}_{t=0}
\widetilde h_x^{*}(\mathcal Z_l^{\mathrm{ns}})
=
\operatorname{ord}_{t=0}
(\vartheta\circ\widetilde h_x)^{*}(\mathcal S_l).
\end{equation}

Define
$h_x(t)=\left(f_B(x+ta)-f_B(x),f_C(x+ta),\phi(x+ta)\right)\in A\times\mathbb A^N,$ where $\phi$ is the $\mathbb A^N$-component of $g$ and denote
$w_x=\hat\pi_k\bigl(j_kh_x(0)\bigr).$
We have
$$
j_{k+l}h_x(t)
=
\left(
f_B(x+ta)-f_B(x),
\hat\pi_{k+l}\circ j_{k+l}^\delta(g)(x+ta)
\right).
$$
Consequently,
\begin{equation}\label{eq:translated-jet-germ}
\bigl(\iota_l\circ \vartheta\circ\widetilde h_x\bigr)(t)
=
\bigl(j_{k+l}h_x(t),w_x\bigr)
=
\widetilde{j_{k+l}h_x}(t).
\end{equation}

It follows from
\eqref{eq:order-base-change},
\eqref{eq:translated-jet-germ}, and Sub-Lemma~\ref{incidence} that
\begin{align}
\operatorname{ord}_{t=0}
\widetilde h_x^{*}(\mathcal Z_l^{\mathrm{ns}})
=
\operatorname{ord}_{t=0}
(\vartheta\circ\widetilde h_x)^{*}(\mathcal S_l)
\nonumber
=
\operatorname{ord}_{t=0}
\bigl(\iota_l\circ \vartheta\circ\widetilde h_x\bigr)^{*}
\bigl(\mathcal T_{k,l}^{\dagger}\bigr)
\nonumber
=
\operatorname{ord}_{t=0}
\bigl(\widetilde{j_{k+l}h_x}\bigr)^{*}
\bigl(\mathcal T_{k,l}^{\dagger}\bigr)
\nonumber
\ge l+1.
\label{eq:contact-order}
\end{align}

On the other hand, the section defining \(F_l\) vanishes on \(\mathcal Z_l^{\mathrm{ns}}\). Hence
\begin{align}
\operatorname{ord}_{t=0}
F_l\bigl(j_{k+l}^\delta(g)(x+ta)\bigr)
=
\operatorname{ord}_{t=0}
\widetilde h_x^*\psi^*F_l
\nonumber
\ge
\operatorname{ord}_{t=0}
\widetilde h_x^{*}(\mathcal Z_l^{\mathrm{ns}})
\nonumber
\ge l+1.
\end{align}

Here \(F_l\) is regarded as a divisor on
$\widehat X_{k+l}=\overline B\times Y_{k+l},$
and the second projection
$\tau:\overline B\times Y_{k+l}\longrightarrow Y_{k+l}$
corresponds to \(\widehat\pi_{k+l}\) under this identification.

\end{proof}

\begin{sublemma}\label{lem:discDVR}
Let \(E\subset\mathbb C^p\) be an irreducible analytic hypersurface and let
\(\delta=\sum_{j=1}^pa_j\frac{\partial}{\partial z_j}\) be transverse to \(E\) at a general point.  For a fixed
nonzero meromorphic section \(\sigma\), we have
\[
 \ord_{t=0}\sigma(x+ta)=\nu_E(\sigma)
\]
for general \(x\in E_{\reg}\).
\end{sublemma}

\begin{proof}
Write \(\sigma=h^mv\), where \(h=0\) is a reduced equation of \(E\), \(m=\nu_E(\sigma)\), and \(v\) is a meromorphic section satisfying \(\nu_E(v)=0\). For general \(x\in E_{\reg}\), \(\delta h(x)\neq0\) and \(v\) is holomorphic and nonvanishing at \(x\). We have $h(x+ta)=t\,\delta h(x)+O(t^2),~ v(x)\neq0.$ The equality follows.
\end{proof}

\noindent{\bf $\bullet$ The end of the proof.} 

Let $E$ be a prime divisor in $j_k^{\delta}(g)^*(Z^\text{ns})$. 
For fixed \(l\), applying Sub-Lemma
\ref{lem:discDVR} to the pullback defining section
of \(F_l\), we get
\[
 \nu_{E}(j_{k+l}^{\delta}(g)^*F_l)\geq l+1.
\]
Therefore,
\begin{equation}\label{eq:HC1}
 (l+1)N^{[1]}(r,j_{k}^{\delta}(g)^*Z^\text{ns})
 \leq N(r,j_{k+l}^{\delta}(g)^*F_l).
\end{equation}
By the First Main Theorem,
\begin{align}
 N(r,j_{k+l}^{\delta}(g)^*F_l)
 &\leq T_{j_{k+l}^{\delta}(g)}(r,F_l)+O(1)\notag\\
 &\leq n(l)T_{f_B}(r,L)
       +T_{\hat \pi_{k+l}\circ j_{k+l}^{\delta}(g)}(r,M_l)+O(1)\notag\\
 &\leq n(l)T_{f_B}(r,L)+S_{f}(r).\label{eq:HC2}
\end{align}

Given \(\eps>0\), choose \(l\) so large that
$ \frac{n(l)}{l+1}<\frac{\eps}{2}.
$
This proves \eqref{eq:HC} on \(Z^{\text{ns}}\).
Every irreducible component of \(Z\setminus Z^\text{ns}\) has codimension at least two in \(X_k\). By induction, we obtain Proposition \ref{thm:HC}.  

Thus the proof of Proposition \ref{thm:HC} is completed.

\end{proof}

 Proposition \ref{thm:HC} gives the following key result.
\begin{theorem}[The key result: a GCD-type theorem]\label{cor:HC-applications}
\begin{enumerate}[label=(\roman*), leftmargin=1.5em, labelsep=0.4em]

\item Let \(Z\subset D\) be a subvariety such that \(\operatorname{codim}_A Z\geq 2\). Then
for every $\varepsilon>0$,
\[
N^{[1]}(r,f^*Z)
\leq_{\mathrm{exc}}
\varepsilon T_f(r,H).
\]

\item Let \(X=\overline{j_{p,1}f(\mathbb C^p)}^{\,\mathrm{Zar}}\), and let \(Z\subset X\cap J_{p,1}(D)\) satisfy \(\operatorname{codim}_X Z\geq2\). Then, for every
$\varepsilon>0$,
\[
N^{[1]}\!\left(r,(j_{p,1}f)^*Z\right)
\leq_{\mathrm{exc}}
\varepsilon T_f(r,H).
\]
\end{enumerate}
    
\end{theorem}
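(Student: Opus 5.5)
Both parts follow from Proposition~\ref{thm:HC} once we choose $g$, $k$ and the direction $\delta$ correctly, and once $Z$ is transported into the Zariski closure $X_k$ of the directional jet lift. We take $g=(f,\phi)$ with $N=0$, i.e.\ $g=f$, so $T_\phi\le S_{f,H}$ holds trivially. Before fixing $\delta$, we list the countably many irreducible hypersurfaces $E_\nu\subset\mathbb C^p$ that will be relevant: all prime components of $f^*Z'$ and $(j_{p,1}f)^*Z'$, for the finitely many subvarieties $Z'$ produced below. Lemma~\ref{lem:generic} then gives $a$ with $\delta_a$ transverse to every $E_\nu$ at general points and $\delta_af\not\equiv0$.

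\textbf{Part (i).} Take $k=0$, so $X_0=\overline{f(\mathbb C^p)}^{\Zar}=A$ by Zariski density, and $j_0^\delta(g)=f$. Since $Z\subset A$ has $\codim_{X_0}Z\ge2$ and every prime divisor of $f^*Z$ is among the $E_\nu$, Proposition~\ref{thm:HC} applies directly and gives
\[
N^{[1]}(r,f^*Z)\le_{\mathrm{exc}}\eps T_f(r,H).
\]

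\textbf{Part (ii).} Here we must relate the $p$-germ $1$-jet $j_{p,1}f=(f,\partial_1 f,\dots,\partial_p f)$ (in the trivialization coming from the invariant frame) to the directional jet $j_1^\delta f=(f,\delta f)$, where $\delta f=\sum_j a_j\partial_j f$. The plan is to use the linear map
\[
\Lambda_a\colon J_{p,1}(A)\simeq A\times\mathbb C^{np}\longrightarrow J_1(A)\simeq A\times\mathbb C^n,\qquad (x,v_1,\dots,v_p)\mapsto\Bigl(x,\sum_j a_jv_j\Bigr).
\]
This map satisfies $\Lambda_a\circ j_{p,1}f=j_1^\delta f$, but it loses information, so $\codim_X Z\ge 2$ need not be preserved under $\Lambda_a$. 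To avoid this, we enlarge $g$ by using the remaining partial derivatives as small auxiliary coordinates. Put
\[
\phi=(\partial_1 f,\dots,\partial_p f)\colon\mathbb C^p\to\mathbb A^{np},
\]
written in the invariant frame. By Lemma~\ref{lem:log-jet-coordinate-small}, $T_\phi(r)\le S_{f,H}(r)$. With $g=(f,\phi)$, the lift $j_1^\delta(g)$ contains $j_{p,1}f$ as a coordinate projection $P$, and $X_1=\overline{j_1^\delta(g)(\mathbb C^p)}^{\Zar}$ maps onto $X$ under $P$. A cleaner alternative is $k=0$: then $X_0=\overline{g(\mathbb C^p)}^{\Zar}\cong X$ under the identification $J_{p,1}(A)\simeq A\times\mathbb A^{np}$, and $j_0^\delta(g)=g=j_{p,1}f$. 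In that case $Z$ is literally a subvariety of $X_0$ of codimension $\ge2$, and $(j_0^\delta g)^*Z=(j_{p,1}f)^*Z$.

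It remains to check the hypotheses of Proposition~\ref{thm:HC}. First, $\delta f\not\equiv0$ holds by the choice of $a$. Second, the prime divisors of $(j_{p,1}f)^*Z$ are transverse to $\delta$ at general points, again by the choice of $a$ in Lemma~\ref{lem:generic}, since these divisors were included in the family $\{E_\nu\}$. The proposition then yields the bound in (ii), with $T_f(r,H)$ on the right-hand side.

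\textbf{Main obstacle.} The delicate point is whether the reduction in Proposition~\ref{thm:HC} survives when $N>0$ and the $\mathbb A^N$-coordinates are derivatives of $f$ itself. Inside that proof we use only $T_\phi\le S_f$, so formally it goes through. I would nevertheless double-check that Sub-Lemma~\ref{incidence} does not require $\phi$ to be independent of $f$; it appears to use only the product structure.
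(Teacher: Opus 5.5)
Your proposal is correct and is essentially the paper's proof: take $k=0$ and $g=f$ for (i), and for (ii) take $g=j_{p,1}f=(f,\xi_{p,1}(f))$, with Lemma~\ref{lem:log-jet-coordinate-small} supplying $T_\phi\le S_{f,H}$, then apply Proposition~\ref{thm:HC}; your $k=1$ detour via $\Lambda_a$ is unnecessary. The one difference is where the hypersurfaces for Lemma~\ref{lem:generic} come from: you use the prime components of $f^*Z$ and $(j_{p,1}f)^*Z$ directly, while the paper uses the components of $\operatorname{Supp}f^*D$ together with $Z\subset D$ and $(j_{p,1}f)^{-1}(J_{p,1}(D))\subset f^{-1}(D)$, so its $\delta$ works for all such $Z$ at once, whereas yours does not need those containment hypotheses.
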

\begin{proof}  

Write
$\operatorname{Supp}f^*D=\bigcup_{\nu}E_\nu$
as the union of its irreducible components. By
Lemma~\ref{lem:generic}, we may choose
$\delta=\sum_{j=1}^p a_j\frac{\partial}{\partial z_j}$ such that \(\delta\) is transverse to every \(E_\nu\) at a general smooth point of \(E_\nu\) and $\delta f\not\equiv0$. 

We first prove (i). Let
$Z\subset D,~\operatorname{codim}_A Z\ge2.$
We apply Proposition \ref{thm:HC} with
$g=f,~ k=0.$
Note that in this case \(N=0\), so there is no \(\phi\)-part. 
Since
$j_0^\delta(g)=g=f$ and \(f({\Bbb C}^p)\) is Zariski dense in \(A\), we have
$X_0=\overline{f({\Bbb C}^p)}^{\,\rm Zar}=A.$
Thus
$\operatorname{codim}_{X_0}Z=\operatorname{codim}_A Z\ge2.$

It remains only to verify the transversality assumption in
Proposition \ref{thm:HC}.  Since \(Z\subset D\),
$f^{-1}(Z)\subset f^{-1}(D), $
and therefore
$\operatorname{Supp} f^*Z\subset\operatorname{Supp} f^*D.$
Let \(E\) be a prime divisor occurring in \(f^*Z\).  Then
$E\subset\operatorname{Supp}f^*D.$
Since \(E\) is an irreducible hypersurface, it must be one of the
irreducible components \(E_\nu\) of
\(\operatorname{Supp}f^*D\).  By the choice of \(\delta\), $\delta$ is transverse to 
$E$ at a general smooth point of \(E\).

Hence all the assumptions of Proposition \ref{thm:HC} are satisfied.
Consequently,
\[
N^{[1]}(r,f^*Z)
\le_{\rm exc}
\varepsilon T_f(r,H),
\]
which proves~(i).

\bigskip
To prove (ii), we may write
$j_{p,1}f=\bigl(f,\xi_{p,1}(f)\bigr),$
where \(\xi_{p,1}(f)\) denotes the jet coordinate part.
By Lemma~2.2,
\[
T_{\xi_{p,1}(f)}(r)
\le 
S_{f, H}(r).
\]
Thus
$g=j_{p,1}f=\bigl(f,\xi_{p,1}(f)\bigr)$
is an admissible map in Proposition \ref{thm:HC}.

We again apply Proposition \ref{thm:HC} with
$k=0.$
Then
$j_0^\delta(g)=g=j_{p,1}f,$
and hence
$X_0=\overline{g({\Bbb C}^p)}^{\,\rm Zar}=\overline{j_{p,1}f({\Bbb C}^p)}^{\,\rm Zar}=X.$
Therefore
$\operatorname{codim}_{X_0}Z=\operatorname{codim}_X Z\ge2.$

We next verify the transversality condition.  Since
$Z\subset J_{p,1}(D),$
we have
$\operatorname{Supp}(j_{p,1}f)^*Z
\subset
\operatorname{Supp}(j_{p,1}f)^*J_{p,1}(D).$
Moreover,
\begin{equation}\label{eq:support-pjet-divisor}
\operatorname{Supp}
(j_{p,1}f)^*J_{p,1}(D)
\subset
\operatorname{Supp}f^*D.
\end{equation}
Indeed, if \(s=0\) is a reduced local defining equation for \(D\),
then \(J_{p,1}(D)\) is locally defined by
$s=0,~d_{\bar u}s=0,~\bar u\in W_{p,\le1}.$
Therefore
$(j_{p,1}f)^{-1}\bigl(J_{p,1}(D)\bigr)\subset f^{-1}(D), $
which gives~\eqref{eq:support-pjet-divisor}.

Now let \(E\) be a prime divisor occurring in
$(j_{p,1}f)^*Z.$
Then
\[
E\subset
\operatorname{Supp}(j_{p,1}f)^*Z
\subset
\operatorname{Supp}f^*D.
\]
Thus \(E\) is one of the irreducible components
\(E_\nu\) of \(\operatorname{Supp}f^*D\).  By our choice of
\(\delta\), $\delta$ is transverse to $E$ 
at a general smooth point.

Hence all the assumptions of Proposition \ref{thm:HC} are satisfied for
$g=j_{p,1}f,~ k=0.$
It follows that
\[
N^{[1]}\bigl(r,(j_{p,1}f)^*Z\bigr)=N^{[1]}(r,g^*Z)\le_{\rm exc}\varepsilon T_f(r,H),
\]
which proves~(ii).
\end{proof}

\section{Proof of the Main Theorem}

The proof of the Main Theorem relies on estimate~\eqref{eq:tangcount}.
To establish this estimate, we decompose
\(X\cap J_{p,1}(D)\) into its irreducible components. We show that each component either has nondense image under \(\pi:X\to Y=X/B\), has image under \(\pi_1\) contained in the
codimension-at-least-two subset \(\Sigma_B(D)\subset A\), or has codimension at least two in \(X\). The first case is treated by the argument of Lemma~\ref{lem:nondominant}, while the latter two cases are treated by the above GCD-type theorem. The key ingredient is Proposition~\ref{prop:dominant}, whose proof follows the strategy of Noguchi--Winkelmann--Yamanoi; see Lemma~6.1 of \cite{NWY2008}.

Let
\[
j_{p,1}(f)\colon \mathbb C^p\longrightarrow J_{p,1}A = A\times \mathbb C^{p\dim A}
\]
be the jet lifting of \(f\). Define $\pi_1 :J_{p,1}A \rightarrow A$ be the projection.

Set
$
X=\overline{j_{p,1}(f)(\mathbb C^p)}^{\,\mathrm{Zar}}
\subset J_{p,1}A.
$ Let \(D\subset A\) be an irreducible reduced divisor. If
\(s=0\) is a reduced local equation of \(D\), then the 1-jet
subscheme \(J_{p,1}(D)\subset J_{p,1}A\) is locally defined by
\[
\mathcal I_{J_{p,1}(D)}
=
\bigl(
s,\,
d_{\bar u}s
\ \big|\ 
\bar u\in\mathcal W_{p,\leq 1}
\bigr).
\]
Equivalently, since the words in \(\mathcal W_{p,\leq1}\) are
\(1,\ldots,p\),
\begin{equation}\label{eq: -first-jet-D}\nonumber
J_{p,1}(D)
=
V\bigl(s,d_1s,\ldots,d_ps\bigr)
\subset J_{p,1}A.
\end{equation}
Here, by definition,
\begin{equation}\label{eq:jet-coordinate-pullback}\nonumber
(d_js)\bigl(j_{p,1}(f)(z)\bigr)
=
\frac{\partial(s\circ f)}{\partial z_j}(z),
\qquad 1\leq j\leq p.
\end{equation}

\begin{lemma}
\label{lem:exact-tangency-valuation}
Let \(E\) be a prime component of the divisor \(f^*D\), and let
$
m=\nu_{E}(f^*D).
$
Then
\[
\nu_{E}\bigl(j_{p,1}(f)^*J_{p,1}(D)\bigr)=m-1.
\]
\end{lemma}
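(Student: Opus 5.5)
The plan is to compute the valuation along \(E\) of the ideal \(\bigl(s\circ f,\partial_1(s\circ f),\ldots,\partial_p(s\circ f)\bigr)\) generated by the pulled-back local equations. By definition of the pullback of the subscheme \(J_{p,1}(D)=V(s,d_1s,\ldots,d_ps)\), the divisorial part of \(j_{p,1}(f)^*J_{p,1}(D)\) along \(E\) is the minimum of \(\nu_E\) over these generators. So it suffices to work near a general point \(x_0\in E_{\reg}\) and show that this minimum is exactly \(m-1\).

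First I would localize. Choose \(x_0\in E_{\reg}\) general, so that \(x_0\) lies on no other component of \(f^*D\), \(f(x_0)\) lies in a chart where \(s\) is a reduced local equation of \(D\), and \(E\) is defined near \(x_0\) by a reduced equation \(h\) with \(dh(x_0)\neq0\). Then write \(s\circ f=h^m u\) with \(u\) holomorphic and nonvanishing near \(x_0\), which is possible since \(m=\nu_E(f^*D)\) and \(x_0\) avoids the other components.

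The lower bound is immediate. We have \(\nu_E(s\circ f)=m\geq m-1\), and
\[
\partial_j(s\circ f)=h^{m-1}\bigl(m\,u\,\partial_jh+h\,\partial_ju\bigr),
\]
so each generator vanishes to order at least \(m-1\) along \(E\).

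For the upper bound, restrict to \(E\): the bracket equals \(m\,u\,\partial_jh\) on \(E\). Since \(u(x_0)\neq0\), \(m\geq1\), and \(\partial_jh(x_0)\neq0\) for some \(j\), that bracket does not vanish identically on \(E\). Its valuation along \(E\) is therefore \(0\), giving \(\nu_E(\partial_j(s\circ f))=m-1\) for this \(j\). Alternatively, one could use the direction \(\delta\) of Lemma~\ref{lem:generic} together with Sub-Lemma~\ref{lem:discDVR} to see that \(\delta(s\circ f)\) has order exactly \(m-1\), but the direct computation above suffices. Taking the minimum gives exactly \(m-1\).

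The only delicate point I expect is the bookkeeping justifying that \(\nu_E\) of the pulled-back subscheme, which is generally not a divisor, means the minimum over generators, i.e.\ the order along \(E\) of its divisorial part. I also need to check that this minimum is independent of the choice of reduced local equation \(s\). For that I would note that replacing \(s\) by \(vs\) with \(v\) a unit changes the generators only by an invertible linear combination modulo \(s\), so the ideal is unchanged. The case \(m=1\), where the valuation is \(0\), is included in the same computation.
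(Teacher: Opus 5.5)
Your proof is correct and follows essentially the same route as the paper. Both write \(s\circ f=h^m u\) with \(u\) a local unit and expand \(\partial_j(s\circ f)=h^{m-1}\bigl(m\,u\,\partial_j h+h\,\partial_j u\bigr)\), then use that some \(\partial_j h\) does not vanish identically on \(E\) to get the exact value \(m-1\). Your extra care (localizing at a general point of \(E_{\reg}\), reading \(\nu_E\) of the pulled-back ideal as the minimum over generators, and checking independence of the choice of \(s\)) fills in details the paper leaves implicit.
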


\begin{proof}
Let \(g=0\) be a reduced local defining equation of \(E\). Since
$m=\nu_{E}(s\circ f),$ we may write
$s\circ f=u\,g^m,$
where \(u\) is a nowhere vanishing holomorphic function.

For every \(1\leq j\leq p\), we have
$
\frac{\partial(s\circ f)}{\partial z_j}
=
g^{m-1}
\left(
m u\frac{\partial g}{\partial z_j}
+
g\frac{\partial u}{\partial z_j}
\right).
$
Consequently,
$
\nu_{E}\left(
\frac{\partial(s\circ f)}{\partial z_j}
\right)
\geq m-1
$
for every \(j\). 
Since \(g=0\) is a reduced local defining equation of the prime
divisor \(E\), its differential does not vanish at the generic point of
\(E\). Hence there exists \(j_0\in\{1,\ldots,p\}\) such that
$\frac{\partial g}{\partial z_{j_0}}\not\equiv0\pmod{(g)}.$
Because \(u\) is nowhere vanishing, it follows that
$m u\frac{\partial g}{\partial z_{j_0}}+g\frac{\partial u}{\partial z_{j_0}}
\not\equiv0\pmod{(g)}$.
Thus
$
\nu_{E}\left(
\frac{\partial(s\circ f)}{\partial z_{j_0}}
\right)
=m-1,
$ which implies 
\[
\nu_{E}\bigl(j_{p,1}(f)^*J_{p,1}(D)\bigr)=m-1.
\]
\end{proof}

Put \(B=\St_A^0(X)\), let \(\pi:X\to Y=X/B\). Define the closed bad base locus
\[
 \Sigma_B(D)= D_{\sing}\cup\overline{\{x\in D_{\reg}~|~ \Lie B\subset T_xD\}}.
\]
If \(B\not\subset\St(D)\), $\Sigma_B(D)$ is a proper subvariety of \(D\)
and hence has codimension at least two in \(A\).  Indeed, if every
invariant vector field induced by \(\Lie(B)\) were tangent to \(D\) at a
general point, its local flows would therefore preserve \(D\). It follows that
\(B\subset\St(D)\), a contradiction.

\begin{proposition}[Tangency versus stabilizer] \label{prop:dominant}
Let \(Z\) be an irreducible component of
$ X\cap J_{p,1}{(D)}$.

(i) If \(B\subset\St(D)\), then
$$
 \overline{\pi(Z)}\subsetneq Y.
$$

(ii) If \(B\not\subset\St(D)\) and \(\pi_1(Z)\not\subset\Sigma_B(D)\), then either
$$
 \overline{\pi(Z)}\subsetneq Y ~~or~~\codim_XZ\geq2.
$$

\end{proposition}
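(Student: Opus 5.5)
We follow the strategy of Lemma~6.1 of \cite{NWY2008}. The key structural fact is that \(X\) is \(B\)-invariant, where \(B\) acts on \(J_{p,1}A=A\times\mathbb C^{p\dim A}\) by translation on the first factor only. Since invariant vector fields on \(A\) trivialize \(TA\), this action preserves the jet coordinates. Hence \(\pi:X\to Y\) has fibres that are \(B\)-orbits, and \(\dim X=\dim Y+\dim B\) (as \(B\) acts with finite stabilizers). We work over a general point \(y\in Y\): since \(Z\) is irreducible and closed, either \(\overline{\pi(Z)}\subsetneq Y\), and we are done, or \(Z\) dominates \(Y\). So we assume \(Z\) dominates \(Y\) and compare \(\dim(Z\cap\pi^{-1}(y))\) with \(\dim B\).

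For (i), assume \(B\subset\St(D)\) and that \(Z\) dominates \(Y\). Then \(J_{p,1}(D)\) is \(B\)-invariant, so \(X\cap J_{p,1}(D)\) is \(B\)-invariant. Its irreducible components are therefore \(B^0\)-invariant, being permuted by a connected group. Thus \(Z\) is a union of full \(B\)-orbits, so \(Z=\pi^{-1}(\pi(Z))\). Dominance would force \(\dim Z=\dim X\), hence \(Z=X\) and \(X\subset J_{p,1}(D)\). In particular \(f(\mathbb C^p)\subset D\), which contradicts Zariski density. Hence \(\overline{\pi(Z)}\subsetneq Y\).

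For (ii), assume \(B\not\subset\St(D)\), \(\pi_1(Z)\not\subset\Sigma_B(D)\), and that \(Z\) dominates \(Y\). We must show \(\codim_X Z\ge2\). Since \(Z\subsetneq X\) (as in (i), \(X\not\subset J_{p,1}(D)\)), it suffices to show \(\dim Z\le\dim X-2\), i.e.\ that a general fibre \(Z_y=Z\cap\pi^{-1}(y)\) has dimension at most \(\dim B-2\). Identify \(\pi^{-1}(y)\) with a translate \(B\cdot(x_0,v)\), where \(v\) is the jet coordinate. Then
\[
Z_y\subset\{b\in B : x_0+b\in D,\ (d_js)(x_0+b,v)=0 \text{ for } 1\le j\le p\}.
\]
The first condition cuts \(B\)-dimension by at least one: if \(x_0+B\subset D\) for general \(y\), then \(X\subset\pi_1^{-1}(D)\), again contradicting density. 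For the second, at a general point of \(Z_y\) lying over \(D_{\reg}\setminus\Sigma_B(D)\) (possible since \(\pi_1(Z)\not\subset\Sigma_B(D)\)), \(\Lie B\not\subset T_xD\). So \(ds\) restricted to the orbit direction is nonzero, and \((x_0+B)\cap D\) is a smooth hypersurface in the orbit near that point. The linear-in-\(v\) conditions \(d_js=\langle ds_x,v_j\rangle=0\), with \(v_j\) the invariant-frame components of \(\partial_j f\), vary with \(x\) along this hypersurface. We must show they are not identically satisfied on a component of \((x_0+B)\cap D\).

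This last step is the main obstacle. We would argue by contradiction. If \(\langle ds_x,v_j\rangle\equiv0\) on a component \(W\) of \((x_0+B)\cap D\) through a good point for all \(j\), then, because the orbit is \(\{(x_0+b,v)\}\) with \(v\) fixed, we obtain \(B\)-translates of the fixed vectors \(v_j\) tangent to \(D\) along \(W\). We would then use dominance of \(Z\): \(Z\) contains such \(W\times\{v\}\) for general \(v\) in the image of \(X\) in the jet coordinates, and these \(v\) span, over points of \(X\), the span of the \(\partial_jf\). Combining this with the genericity of \(j_{p,1}f\) should force every tangent vector in \(\Lie B+\mathrm{span}(v_j)\) to be tangent to \(D\). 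Alternatively, it should force \(\dim Z=\dim X-1\) with \(Z\) equal to \(X\cap\pi_1^{-1}(D)\) over \(D\), which would give \(j_{p,1}f^*J_{p,1}(D)\) the same valuation as \(f^*D\). That contradicts Lemma~\ref{lem:exact-tangency-valuation}, which says the valuation drops by exactly one, since \(\nu_E\) of \(j_{p,1}(f)^*J_{p,1}(D)\) is \(m-1\) rather than \(m\). I expect this valuation comparison, via Lemma~\ref{lem:exact-tangency-valuation}, to be the cleanest route to ruling out the codimension-one case.
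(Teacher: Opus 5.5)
Your part (i) is correct and is the paper's argument. Part (ii), however, has a genuine gap. The step you flag as ``the main obstacle'' is the whole content of the proposition, and neither of your sketched routes closes it. A purely fibrewise analysis over a general $y\in Y$ cannot work on its own: nothing about $D$ and $B$ alone prevents the fixed vectors $v_j$ from being tangent to $D$ along a whole component of $(x_0+B)\cap D$. The contradiction has to use that $X$ is the Zariski closure of an actual jet lift. Your valuation route, as stated, also fails. Lemma~\ref{lem:exact-tangency-valuation} holds unconditionally. Knowing that $Z$ is set-theoretically a component of both $X\cap\pi_1^{-1}(D)$ and $X\cap J_{p,1}(D)$ says nothing about the two scheme structures along $Z$, so no equality of valuations follows. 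Moreover, $j_{p,1}f$ need not meet $Z$ in codimension one at all, in which case the lemma has nothing to act on. Finally, ``it should force $\dim Z=\dim X-1$'' is circular, since that is exactly the assumption you are trying to refute.

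Two concrete ingredients are missing, and both are in the paper.

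\emph{Translation to a point of $Z$.} Since $Z$ dominates $Y$, the map $B\times Z\to X$, $(b,\eta)\mapsto b+\eta$, is dominant. Hence some $j_{p,1}f(z_0)$ equals $b_0+\zeta$, where $\zeta=(q,w)$ lies in the nonempty open part of $Z$ on which $X$ and $Z$ are smooth and $q\in D_{\reg}\setminus\Sigma_B(D)$. Replace $f$ by $\widehat f(z)=-b_0+f(z+z_0)$. This map is still Zariski dense, has jet closure $X$, and satisfies $j_{p,1}\widehat f(0)=\zeta$.

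\emph{Local product structure.} Choose $v\in\Lie B$ with $v\notin T_qD$. Then $d\pi_1(v_X(\zeta))=v$ forces $v_X(\zeta)\notin T_\zeta Z$. If $\codim_XZ=1$, the map $(t,\eta)\mapsto\gamma(t)+\eta$, with $\gamma$ a disc in $B$ tangent to $v$, is a biholomorphism from some $\Delta_0\times Z_0$ onto a neighbourhood $\Omega$ of $\zeta$ in $X$. Translation by $B$ does not change jet coordinates, so every jet in $\Omega$ is tangent to the local foliation of $A$ by the leaves $\gamma(t)+U$, with $U\subset D_{\reg}$. Thus $\widehat f$ is tangent to these leaves near $0$ and lies in one of them. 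By the identity theorem, $\widehat f(\mathbb C^p)\subset\gamma(t_0)+D$, contradicting Zariski density.

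Your Lemma~\ref{lem:exact-tangency-valuation} idea can in fact be rescued with these same two ingredients. The identity $d(s\circ\pi_1)_\zeta(v_X(\zeta))=ds_q(v)\neq0$ shows that $s\circ\pi_1|_X$ is a reduced local equation of $Z$ at $\zeta$. Since each $d_js$ vanishes on $Z$, it follows that $X\cap J_{p,1}(D)=X\cap\pi_1^{-1}(D)$ as schemes near $\zeta$. Then along any prime component $E$ of $\widehat f^*D$ through $0$, $\nu_E\bigl(j_{p,1}(\widehat f)^*J_{p,1}(D)\bigr)=\nu_E(\widehat f^*D)$, contradicting the lemma's value $m-1$. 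But neither the translation to a point of $Z$ nor this scheme-theoretic comparison appears in your proposal, and without them part (ii) is not proved.
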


\begin{proof} We first prove (i).
Since \(B\subset\St(D)\), 
\(J_{p,1}{(D)}\) is \(B\)-invariant and \(B\)
acts trivially on the finite set of irreducible components of
$X\cap J_{p,1}{(D)},$
so \(Z\) is \(B\)-invariant.  If \(\overline{\pi(Z)}=Y\), then \(Z/B=Y\) and
hence \(Z=X\).  It follows that
$ X\subset J_{p,1}{(D)}\subset\pi_1^{-1}(D),$
so \(f(\mathbb C^p)\subset D\), contrary to the Zariski density of \(f\).
Therefore \(\overline{\pi(Z)}\subsetneq Y\).

To prove (ii),  assume that \(B\not\subset\St(D)\) and 
\(\pi_1(Z)\not\subset\Sigma_B(D)\). Assume that
\begin{equation}\label{eq:dominant-contradiction}
 \overline{\pi(Z)}=Y,
 \qquad \codim_X Z=1.
\end{equation}
Because \(\pi|_Z\) is dominant, \(Z\cap\pi^{-1}(Y_{\reg})\) is
dense and open in \(Z\).  Since \(\pi\) is smooth, \(X\) is smooth
at every point over \(Y_{\reg}\).  Moreover, \(Z_{\reg}\) is dense
in \(Z\), and the assumption on \(\pi_1(Z)\) implies that
$ Z\cap\pi_1^{-1} \bigl(D_{\reg}\setminus\Sigma_B(D)\bigr)$
is a nonempty open subset of \(Z\).  Consequently
\[
 Z^\circ=Z_{\reg}\cap\pi^{-1}(Y_{\reg})\cap
 \pi_1^{-1}\bigl(D_{\reg}\setminus\Sigma_B(D)\bigr)
\]
is a nonempty Zariski open subset of \(Z\), and both \(X\) and \(Z\)
are smooth at every point of \(Z^\circ\).

Consider the morphism
\[
 \mu:B\times Z\longrightarrow X,\qquad(b,\eta)\longmapsto b+\eta.
\]
We have
$ \mu(B\times Z)=\pi^{-1}\bigl(\pi(Z)\bigr).$
By \eqref{eq:dominant-contradiction}, \(\pi(Z)\) is a dense
constructible subset of \(Y\) and contains a nonempty Zariski open subset in $Y$.
The same is true of \(\pi(Z^\circ)\), because \(Z^\circ\) is dense
in \(Z\).  Thus \(\mu|_{B\times Z^\circ}\) is dominant, so \(\mu(B\times Z^\circ)\) is dense constructible
in \(X\), and therefore contains a nonempty Zariski open subset
\(X^\circ\subset X\).

Since \(j_{p,1}f(\mathbb C^p)\cap X^\circ\not =\emptyset\), choose
$ z_0\in\mathbb C^p, ~b_0\in B,~\zeta=(q,w_1,\ldots,w_p)\in Z^\circ$
such that \(j_{p,1}f(z_0)=b_0+\zeta\), and define
$\widehat f(z)=-b_0+f(z+z_0)$.

Therefore
\begin{equation}\label{eq:translated- -jet}
 j_{p,1}\widehat f(z)=-b_0+j_{p,1}f(z+z_0),
 \qquad j_{p,1}\widehat f(0)=\zeta.
\end{equation}
Since \(b_0\in B\), the map \(j_{p,1}\widehat f\) still takes values in \(X\) and has
Zariski closure \(X\).  The map \(\widehat f\) is also
Zariski dense in \(A\).

Because \(q\notin\Sigma_B(D)\), there is \(v\in\Lie(B)\) such that
\begin{equation}\label{eq:transverse-v}
 v\notin T_qD.
\end{equation}
Denote $v_X(\zeta)=\left.\frac{d}{dt}\right|_{t=0}\bigl(\exp_B(tv)+\zeta\bigr)\in T_\zeta X$. Since \(\pi_1(Z)\subset D\),
$
 d{\pi_1}_\zeta(T_\zeta Z)\subset T_qD,~
 d{\pi_1}_\zeta\bigl(v_X(\zeta)\bigr)=v.
$
It follows from \eqref{eq:transverse-v} that
\begin{equation}\label{eq:action-transverse-Z}
 v_X(\zeta)\notin T_\zeta Z.
\end{equation}
By \eqref{eq:dominant-contradiction}, \(T_\zeta Z\) is a hyperplane
in \(T_\zeta X\).

Choose a holomorphic embedded disc
$
 \gamma:\Delta\longrightarrow B,~
 \gamma(0)=0, ~\gamma'(0)=v.
$
Let \(U\subset D_{\reg}\) be a sufficiently small analytic
neighborhood of \(q\). Define $ G:\Delta\times U\longrightarrow A
$ by $G(t,x)=\gamma(t)+x$. By (\ref{eq:transverse-v}) and \(\dim D=\dim A-1\), 
$dG_{(0,q)}$ is an isomorphism.  After shrinking \(\Delta\) and \(U\),
we have a biholomorphism
\begin{equation}\label{eq:base-product-chart}
 G:\Delta_0\times U\xrightarrow{\sim}\mathcal U
\end{equation}
onto an analytic neighborhood \(\mathcal U\) of \(q\).  

Define
$ \tau=\operatorname{pr}_{\Delta_0}\circ G^{-1}: \mathcal U\longrightarrow\Delta_0.$
For fixed \(t\in\Delta_0\), then
\begin{equation}\label{eq:tau-leaf}
 \tau^{-1}(t)=G(\{t\}\times U)=\gamma(t)+U.
\end{equation}

Define
$ F:\Delta\times Z\longrightarrow X$ by $ F(t,\eta)=\gamma(t)+\eta.$
By \eqref{eq:action-transverse-Z} and \(\dim Z=\dim X-1\),
$dF_{(0,\zeta)}$ is an isomorphism.  Shrinking the disc in
\eqref{eq:base-product-chart}, if necessary, and choosing an analytic
neighborhood \(Z_0\subset Z^\circ\) of \(\zeta\), with
\(\pi_1(Z_0)\subset U\), we obtain a
biholomorphism
$F:\Delta_0\times Z_0\xrightarrow{\sim}\Omega$
onto an analytic neighborhood \(\Omega\) of \(\zeta\) in \(X\).
It is easy to see that
$
 \pi_1\circ F=G\circ(\id_{\Delta_0}\times\pi_1)
  \hbox{ on }\Delta_0\times Z_0.
$
In particular, \(\pi_1(\Omega)\subset\mathcal U\).

Let
$
 (x,\alpha_1,\ldots,\alpha_p)
 =F\bigl(t,(q',\beta_1,\ldots,\beta_p)\bigr)\in\Omega,
$ where \(t\in\Delta_0\), \((q',\beta_1,\ldots,\beta_p)\in Z_0\), \(x\in\mathcal U\), and \(\alpha_j,\beta_j\in\Lie(A)\) for \(1\leq j\leq p\).
Since \(Z_0\subset J_{p,1}{(D)}\) and
\(q'\in U\subset D_{\reg}\),
$
 \beta_j\in T_{q'}D,~
  1\leq j\leq p.
$
Every jet in \(\Omega\) satisfies
\begin{equation}\label{eq:leafannihilate}
 d\tau_x(\alpha_j)=0,
 \qquad j=1,\ldots,p.
\end{equation}

By \eqref{eq:translated- -jet} and continuity, there is a
connected analytic neighborhood \(W\subset\mathbb C^p\) of \(0\) such that
$
 j_{p,1}\widehat f(W)\subset\Omega.
$
By \eqref{eq:leafannihilate}, we get
$
 \frac{\partial}{\partial z_j}(\tau\circ\widehat f)(z)
 =0,
  1\leq j\leq p, z\in W.
$
Thus \(\tau\circ\widehat f\) is constant on \(W\).  If its value is
\(t_0\), then \eqref{eq:tau-leaf} gives
$
 \widehat f(W)\subset\gamma(t_0)+U
 \subset\gamma(t_0)+D.
$
$
 \widehat f^{-1}\bigl(\gamma(t_0)+D\bigr)
$
is a closed analytic subset of the connected complex manifold
\(\mathbb C^p\), and it contains the nonempty open set \(W\).  It is
therefore all of \(\mathbb C^p\).  Hence, the image of
\(\widehat f\) is contained in the proper algebraic subset
\(\gamma(t_0)+D\), contradicting its Zariski density.
\end{proof}

\begin{theorem}\label{prop:tangcount}
For every irreducible reduced divisor \(D\subset A\) and every
\(\eps>0\),
\begin{equation}\label{eq:tangcount}
 N^{[1]}
 \left(r,(j_{p,1}f)^*J_{p,1}{(D)}\right)
 \leq_{\mathrm{exc}} \eps T_f(r,H).
\end{equation}
\end{theorem}

\begin{proof}
We have $ N^{[1]}
 \left(r,(j_{p,1}f)^*J_{p,1}{(D)}\right)= N^{[1]}
 \left(r,(j_{p,1}f)^*(J_{p,1}{(D)\cap X)}\right)$.
Decompose \( J_{p,1}{(D)\cap X}\) into finitely many irreducible
components \(Z_\alpha\).

If \(B\subset\St(D)\), Proposition~\ref{prop:dominant} says that
every \(Z_\alpha\) has nondense image under
\(\pi:X\to X/B\).  Applying the argument of
Lemma~\ref{lem:nondominant} to \(\pi\circ j_{p,1}f\), we get \eqref{eq:tangcount}. This finishes the
proof in this case.

Assume that \(B\not\subset\St(D)\), so
\(\codim_A\Sigma_B(D)\geq2\).
If the base projection of \(Z_\alpha\) is contained in
\(\Sigma_B(D)\), applying Theorem~\ref{cor:HC-applications} to $f$ and \(\Sigma_B(D)\), we obtain the required estimate.

For the other components, Proposition~\ref{prop:dominant} applies.
If $\overline{\pi(Z_\alpha)}\subsetneq Y$, use the same argument in Lemma~\ref{lem:nondominant} for \(\pi\circ j_{p,1}f\).
If it is dense, then $\codim_XZ_\alpha\geq2$.
Apply Theorem~\ref{cor:HC-applications} to\ \(j_{p,1}f\) and $Z_\alpha$. This finishes the
proof.
\end{proof}

\begin{proof}[Proof of Theorem~\ref{thm:main}]
After quotienting by \(\operatorname{St}^0(D)\), we may assume that
\(\operatorname{St}^0(D)=\{0\}.\)
Hence  \(\overline D\) is big, which implies $T_f(r,\overline D)\asymp T_f(r,H)$.
Write $D=\sum_{i=1}^qD_i$ where \(D_1,\ldots,D_q\) are the distinct prime components of $D$. By Lemma \ref{lem:exact-tangency-valuation}, one has
\begin{align}\nonumber
 N_f^{[\rho]}(r,D)
 \leq {}&
 N_f^{[1]}(r,D)
 +(\rho-1)\sum_{i=1}^q
  N^{[1]}
 \left(r,(j_{p,1}f)^*J_{p,1}{(D_i)}\right)\notag\\\nonumber
 &+\sum_{1\leq i<j\leq q}
 N^{[1]}(r,f^*(D_i\cap D_j)).\nonumber
\label{eq:countingcompare}
\end{align}
Since $\codim_A (D_i\cap D_j) \geq2$ for $i\not=j$, Theorem \ref{thm:main} follows from Theorem \ref{SEMIABELIAN}, Theorem \ref{cor:HC-applications} (i) and Theorem \ref{prop:tangcount}.
\end{proof}

\noindent\textbf{Acknowledgments.} 
The author would like to thank his advisor, Min Ru, for the constant support and encouragement, as well as for many valuable discussions and suggestions.


\begin{thebibliography}{10}
  

\bibitem{Bloch1926}
A.~Bloch,
\emph{Sur les syst\`emes de fonctions uniformes satisfaisant \`a l'\'equation d'une vari\'et\'e alg\'ebrique dont l'irr\'egularit\'e d\'epasse la dimension},
J. Math. Pures Appl. \textbf{5} (1926), 19--66.

\bibitem{CaiRuYang2}
Q.~Cai, M.~Ru and C. J.~Yang,
\emph{Defect relations for meromorphic maps of complete K\"ahler manifolds into projective varieties},
Complex Anal. Synerg. \textbf{11} (2025), article 14.

\bibitem{CDY25}
B.~Cadorel, Y.~Deng and K.~Yamanoi,
\emph{Hyperbolicity and fundamental groups of complex quasi-projective varieties (I):
Maximal quasi-Albanese dimension by Nevanlinna theory},
arXiv:2511.04405, 2025.

\bibitem{Dem}
J.-P.~Demailly,
\emph{Algebraic criteria for Kobayashi hyperbolic varieties and jet differentials},
Proc. Sympos. Pure Math. \textbf{62}, Part~2 (1995), 285--360.

\bibitem{DL01}
G.-E.~Dethloff and S.~S.-Y.~Lu,
\emph{Logarithmic jet bundles and applications},
Osaka J. Math. \textbf{38} (2001), no.~1, 185--237.

\bibitem{Ete}
A.~Etesse,
\emph{Geometric generalized Wronskians: Applications in intermediate hyperbolicity and foliation theory},
Int. Math. Res. Not. IMRN \textbf{2023} (2023), no.~10, 8251--8310.

\bibitem{AG}
R.~Hartshorne,
\emph{Algebraic Geometry},
Graduate Texts in Mathematics, vol.~52,
Springer-Verlag, New York--Heidelberg, 1977.



\bibitem{Noguchi1977}
J.~Noguchi,
\emph{Holomorphic curves in algebraic varieties},
Hiroshima Math. J. \textbf{7} (1977), 833--853.

\bibitem{Noguchi1981}
J.~Noguchi,
\emph{Lemma on logarithmic derivatives and holomorphic curves in algebraic varieties},
Nagoya Math. J. \textbf{83} (1981), 213--233.

\bibitem{Noguchi1998}
J.~Noguchi,
\emph{On holomorphic curves in semi-abelian varieties},
Math. Z. \textbf{228} (1998), 713--721.

\bibitem{NO}
J.~Noguchi and T.~Ochiai,
\emph{Geometric Function Theory in Several Complex Variables},
Japanese edition, Iwanami, Tokyo, 1984;
English translation, Transl. Math. Monogr., vol.~80,
American Mathematical Society, Providence, RI, 1990.

\bibitem{NW03}
J.~Noguchi and J.~Winkelmann,
\emph{A note on jets of entire curves in semi-abelian varieties},
Math. Z. \textbf{244} (2003), 705--710.

\bibitem{NW04}
J.~Noguchi and J.~Winkelmann,
\emph{Bounds for curves in abelian varieties},
J. Reine Angew. Math. \textbf{572} (2004), 27--47.

\bibitem{NWbook}
J.~Noguchi and J.~Winkelmann,
\emph{Nevanlinna Theory in Several Complex Variables and Diophantine Approximation},
Grundlehren der mathematischen Wissenschaften, vol.~350,
Springer, Tokyo, 2014.

\bibitem{NWY2002}
J.~Noguchi, J.~Winkelmann and K.~Yamanoi,
\emph{The second main theorem for holomorphic curves into semi-abelian varieties},
Acta Math. \textbf{188} (2002), no.~1, 129--161.

\bibitem{NWY2008}
J.~Noguchi, J.~Winkelmann and K.~Yamanoi,
\emph{The second main theorem for holomorphic curves into semi-abelian varieties II},
Forum Math. \textbf{20} (2008), no.~3, 469--503.

\bibitem{Ochiai1977}
T.~Ochiai,
\emph{On holomorphic curves in algebraic varieties with ample irregularity},
Invent. Math. \textbf{43} (1977), 83--96.

\bibitem{PR}
G.~Pacienza and E.~Rousseau,
\emph{Generalized Demailly--Semple jet bundles and holomorphic mappings into complex manifolds},
J. Math. Pures Appl. (9) \textbf{96} (2011), no.~2, 109--134.

\bibitem{ru_annals}
M.~Ru,
\emph{Holomorphic curves into algebraic varieties},
Ann. of Math. (2) \textbf{169} (2009), no.~1, 255--267.

\bibitem{book}
M.~Ru,
\emph{Nevanlinna Theory and Its Relation to Diophantine Approximation},
2nd ed.,
World Scientific, Singapore, 2021.

\bibitem{HCG}
Y.-T.~Siu,
\emph{Hyperbolicity in complex geometry},
in \emph{The Legacy of Niels Henrik Abel},
Springer, Berlin, 2004, 543--566.

\bibitem{SiuYeung1996}
Y.-T.~Siu and S.-K.~Yeung,
\emph{A generalized Bloch's theorem and the hyperbolicity of the complement of an ample divisor in an Abelian variety},
Math. Ann. \textbf{306} (1996), 743--758.

\bibitem{SiuYeung1997}
Y.-T.~Siu and S.-K.~Yeung,
\emph{Defects for ample divisors of abelian varieties, Schwarz lemma, and hyperbolic hypersurfaces of low degree},
Amer. J. Math. \textbf{119} (1997), 1139--1172.

\bibitem{Stoll1953}
W.~Stoll,
\emph{Die beiden Haupts\"atze der Wertverteilungstheorie. I},
Acta Math. \textbf{90} (1953), 1--115.

\bibitem{Stoll}
W.~Stoll,
\emph{Holomorphic Functions of Finite Order in Several Complex Variables},
CBMS Regional Conference Series in Mathematics, vol.~21,
American Mathematical Society, Providence, RI, 1974.

\bibitem{Stoll1977}
W.~Stoll,
\emph{Value Distribution on Parabolic Spaces},
Lecture Notes in Mathematics, vol.~600,
Springer-Verlag, Berlin--Heidelberg--New York, 1977.

\bibitem{Vitter1977}
A.~L.~Vitter,
\emph{The lemma of the logarithmic derivative in several complex variables},
Duke Math. J. \textbf{44} (1977), no.~1, 89--104.

\bibitem{WangCp}
Z.~Wang,
\emph{Meromorphic maps from $\mathbb C^p$ into semi-abelian varieties and general projective varieties},
J. Geom. Anal. \textbf{36} (2026), article 260,
doi:10.1007/s12220-026-02508-8.

\bibitem{Wong1980}
P.-M.~Wong,
\emph{Holomorphic mappings into abelian varieties},
Amer. J. Math. \textbf{102} (1980), no.~3, 493--502.

\bibitem{Y04}
K.~Yamanoi,
\emph{Holomorphic curves in abelian varieties and intersections with higher codimensional subvarieties},
Forum Math. \textbf{16} (2004), 749--788.

\bibitem{Yamanoi2015}
K.~Yamanoi,
\emph{Holomorphic curves in algebraic varieties of maximal Albanese dimension},
Int. J. Math. \textbf{26} (2015), no.~6,
1541006, 45 pp.

\bibitem{Yamanoi2}
K.~Yamanoi,
\emph{Kobayashi hyperbolicity and higher-dimensional Nevanlinna theory},
in \emph{Geometry and Analysis on Manifolds},
Progress in Mathematics, vol.~308,
Birkh\"auser/Springer, 2015, 209--273.

\end{thebibliography}
\end{document}